\numberwithin{equation}{section}
\definecolor{VerdeOlivo}{rgb}{0.3,0.5,0.1}
\definecolor{Magenta}{rgb}{.65,0.15,.2}
\definecolor{Gris}{gray}{0.3}
\newtheorem{Theorem}{Theorem}[section] 
\newtheorem{Lemma}[Theorem]{Lemma} 
\newtheorem{Corollary}[Theorem]{Corollary} 
\newtheorem{Remark}[Theorem]{Remark}
\newtheorem{Claim}[Theorem]{Claim} 
\begin{document} 


\title[On the critical group of matrices]{On the critical group of matrices}


\author{Hugo Corrales}
\address{
Departamento de
Matem\'aticas\\
Centro de Investigaci\'on y de Estudios Avanzados del
IPN\\
Apartado Postal
14--740 \\
07000 Mexico City, D.F. 
} 
\email[H. ~Corrales]{hhcorrales@gmail.com}
\thanks{The first author was partially supported by CONACyT and the second author was partially supported by SNI}

\author{Carlos E. Valencia}
\email[C. ~Valencia\footnote{Corresponding author}]{cvalencia@math.cinvestav.edu.mx, cvalencia75@gmail.com}

\keywords{Critical group, matrices, cartesian product, complete graph, path, cycle.}
\subjclass[2000]{Primary 05C25; Secondary 05C50, 05E99.} 


\begin{abstract} 
Given a graph $\mathcal{G}$ with a distinguished vertex $s$, the critical group of  $(\mathcal{G},s)$ is the cokernel of their reduced Laplacian matrix $L(G,s)$.
In this article we generalize the concept of the critical group to the cokernel of any matrix with entries in a commutative ring with identity.
In this article we find diagonal matrices that are equivalent to some matrices that generalize the reduced Laplacian matrix of the path, the cycle, and the complete graph 
over an arbitrary commutative ring with identity. 
We are mainly interested in those cases when the base ring is the ring of integers and some subrings of matrices.
Using these equivalent diagonal matrices we calculate the critical group of the $m$-cones of the $l$-duplications of the path, the cycle, and the complete graph. 
Also, as byproduct, we calculate the critical group of another matrices, as the $m$-cones of the $l$-duplication of the bipartite complete graph with $m$ vertices in each partition, 
the bipartite complete graph with $2m$ vertices minus a matching. 
\end{abstract}

\maketitle


\section{Introduction}
Let $\mathcal{G}=(V,E)$ be a finite connected graph without loops, but with multiple edges allowed.
The {\it adjacency matrix} of $\mathcal{G}$, denoted by $A(\mathcal{G})$, is given by $A(\mathcal{G})_{u,v}=m_{u,v}$,
where $m_{u,v}$ is the number of the edges between $u$ and $v$ in $V$.
The {\it Laplacian matrix} of $\mathcal{G}$ is the matrix $L(\mathcal{G})=D(\mathcal{G})-A(\mathcal{G})$ where 
\[
D(\mathcal{G})_{u,v}=
\begin{cases}
d_{\mathcal{G}}(u) & \text{ if } u=v,\\
0 & \text{ otherwise},
\end{cases}
\]
and $d_{\mathcal{G}}(u)$ is the degree of the vertex $u$ in $\mathcal{G}$.
If $s$ is a vertex of $\mathcal{G}$, the {\it reduced Laplacian matrix}, denoted by $L(\mathcal{G},s)$, is the matrix obtained
from $L(\mathcal{G})$ by removing the row and column $s$.
The {\it critical group} of $\mathcal{G}$, denoted by $K(\mathcal{G})$, is the cokernel of $L(\mathcal{G},s)$,
\[
K(\mathcal{G})=\mathbb{Z}^{\widetilde{V}}/{\rm Im}\, L(\mathcal{G},s)^t,
\]
where $\widetilde{V}=V\setminus s$.

The critical group is an abelian group that is isomorphic to the sandpile group introduced by 
Dhar in~\cite{dhar90}, which generalizes the case of a grid from~\cite{bak87}.
The critical group has been studied by several authors, 
see for instance~\cite{Bai,berget09,biggs99,p4cn,mobius,threshold,k3cn,cartesian,kmpn,musiker,shen,directed,wang09,wang09p}.

The concept of critical group can be generalized easily to an arbitrary commutative ring with identity  $\mathbb{A}$.
More precisely, if $M\in M_{m\times n}(\mathbb{A})$, then the {\it critical module} of $M$, denoted by $K(M)$, is defined as:
\[
K(M):=\mathbb{A}^n/M^t\mathbb{A}^m.
\]

Given $H<GL_n(\mathbb{A})$ and $H'<GL_m(\mathbb{A})$, we say that $M,N \in M_{m\times n}(\mathbb{A})$ 
are {\it $(H,H')$-equivalent}, denoted by $N\sim_{(H,H')}M$, if there exist $P\in H$ and $Q\in H'$ such that $N=PMQ$.
When $H=SL_n(\mathbb{A})$ and $H'=SL_m(\mathbb{A})$, then we simply say that $M$ and $N$ 
are {\it unitary equivalent} and will be denoted by $N\sim_u M$.
Also, if $H$ and $H'$ are the subgroups generated by the elementary matrices, we simply say that $M$ and $N$
are {\it elementary equivalent} and will be denoted by $N\sim_e M$.
Finally, if $H=GL_n(\mathbb{A})$ and $H=GL_n(\mathbb{A})$, then we simply say that $M$ and $N$ are equivalent
and will be denoted by $N\sim_{\mathbb{A}} M$ or $M\sim N$ if the ring $\mathbb{A}$ is clear from the context.

Usually the critical group of a graph can be described in terms of a diagonal matrix called the Smith Normal Form of $L(\mathcal{G})$.
Moreover, is not difficult to see that if $M$ and $N$ are equivalent, then 
\[
K(M)=\mathbb{A}^n/M^t\mathbb{A}^m\cong \mathbb{A}^n/N^t\mathbb{A}^m=K(N).
\]
When the base ring $\mathbb{A}$ is Principal Ideal Domain (PID), another description of the critical group of a matrix $M$ is given by
\[
K(M)=\bigoplus_{i=1}^{|V|} \mathbb{A}_{\Delta_i(M)/\Delta_{i-1}(M)},
\]
where $\Delta_i(M)$ is the greatest common divisor of all the $i\times i$ minors of $M$.

This article is divided in two sections: In the first section, we find diagonal matrices over an arbitrary commutative ring with identity 
that are equivalent to some matrices that generalize the Laplacian matrices of the path, the cycle, and the complete graph.
In the second section we apply  the results obtained in the first section 
in the case when the base ring is the ring of integers and some subrings of matrices.
In particular we are able to calculate the critical group of the $m$-cones of the $l$-duplications of the path, the cycle, the complete graph,
the bipartite complete graph with $m$ vertices in each partition, the bipartite complete graph with $2m$ vertices minus a matching, etc. 


In the following, every multigraph will be connected and will have a distinguished vertex $s_G\in V(G)$, called the \textit{sink} of $G$.
Sometimes we will simply write $s$ instead of $s_G$.
In~\cite{diestel} it can be seen any unexplained term of graph theory.




\section{The critical module of matrices}\label{sec1}

In this section we will find diagonal matrices that are equivalent to some matrices that generalize the Laplacian matrices
of the path, the cycle, and the complete graphs.
After that, we will apply these results in order to calculate the critical group for several families of graphs.

For all $n\geq 2$ and $a,b \in \mathbb{A}$, let $K_n(a,b)=(a+b)I_n+bA(\mathcal{K}_n)$, $T_n(a,b)=aI_n+bA(\mathcal{P}_n)$, 
$C_n(a,b)=aI_n+bA(\mathcal{C}_n)$,
and
\[
P_n(a,b)=\left(\begin{array}{ccccc}
a\!+\!b&b&0&\ldots&0\\
b&a&\ddots&\ddots&\vdots\\
0&\ddots&\ddots&\ddots&0\\
\vdots&\ddots&\ddots&a&b\\
0&\cdots&0&b&a\!+\!b
\end{array}\right).
\]
where $I_n\in M_{n\times n}(\mathbb{A})$ is the identity matrix on order $n$. 

Since the critical module of a matrix is invariant under equivalency classes, 
then in order to determine the critical module of a matrix, it is enough to find an equivalent diagonal matrix.

\begin{Theorem}\label{prin}
Let $a,b \in \mathbb{A}$ such that the equation $ax+by=1$ has solution in $\mathbb{A}$ 
and $f_n(x,y)$ polynomials in $\mathbb{A}[x,y]$ that satisfy the recurrence relation 
\[
f_n(x,y)=xf_{n-1}(x,y)-y^2f_{n-2}(x,y)
\]
with initial values $f_{-1}(x,y)=0$ and $f_0(x,y)=1$.
Then 
\begin{description}
\item[(i)] $T_n(a,b)\sim_u {\rm diag}(1,\ldots,1,f_n(a,b))$ for all $n\geq 2$,

\item[(ii)]  $P_n(a,b)\sim_u {\rm diag}(1,\ldots,1,(a+2b)f_{n-1}(a,b))$ for all $n\geq 2$,

\item[(iii)] $K_n(a,b)\sim_u \mathrm{diag}(1,a,\ldots,a,a(a+nb))$ for all $n\geq 2$, and

\item[(iv)] $C_n(a,b)\sim_{u} I_{n-2} \oplus C$ for all $n\geq 4$, where
\[
C=
\begin{cases}
f_q(a,b)\left(\begin{array}{cc}
a&2b\\
2b&a
\end{array}\right) & \text{ if } n-2=2q,\\
\big(f_{q+1}(a,b)-bf_q(a,b)\big)\left(\begin{array}{cc}
1&0\\
0&a+2b
\end{array}\right) & \text{ if } n-2=2q+1.
\end{cases}
\]
\end{description}
\end{Theorem}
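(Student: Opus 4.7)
The plan is to prove each of the four claims by exhibiting an explicit sequence of unimodular row and column operations that reduces the given matrix to its stated normal form. The central technical tool is the $SL_2(\mathbb{A})$ matrix $E=\left(\begin{smallmatrix} x & y \\ -b & a \end{smallmatrix}\right)$ guaranteed by the hypothesis $ax+by=1$, together with the observation that any $h(a,b)$ with $h\equiv a^N\pmod b$ still satisfies $uh+vb=1$ for some $u,v\in\mathbb{A}$; this follows by expanding $1=(ax+by)^N$ via the binomial theorem.

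For parts (i) and (ii) I would proceed by the same iterative top-left reduction. For (i), apply $E\oplus I_{n-2}$ on the left of $T_n$ and clear the resulting first row by elementary column operations; a short calculation produces $[1]\oplus M_n^{(1)}$, where $M_n^{(1)}$ is tridiagonal with the usual tail and top row $(f_2,\,bf_1,\,0,\ldots,0)$. Iterating, after $k$ steps the block $M_n^{(k)}$ has top row $(f_{k+1},\,bf_k,\,0,\ldots,0)$; the next step requires $uf_{k+1}+vb=1$, which is solvable because the recurrence gives $f_{k+1}\equiv a^{k+1}\pmod b$. After $n-1$ steps the block shrinks to the scalar $f_n(a,b)$. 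For (ii), the identical iteration works upon replacing $f_k$ by $g_k:=f_k+bf_{k-1}$ (so $g_1=a+b$ matches the corner of $P_n$, and $g_k$ obeys the same recurrence with the same congruence mod $b$). After $n-2$ steps, the residual $2\times 2$ block is $\left(\begin{smallmatrix} g_{n-1} & bg_{n-2} \\ b & a+b \end{smallmatrix}\right)$, whose determinant the recurrence simplifies to $(a+2b)f_{n-1}$, and one more reduction finishes (ii).

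For (iii) the reduction is direct. First subtract Row~$1$ from Rows $2,\ldots,n$, then add Columns $2,\ldots,n$ into Column~$1$; the resulting matrix has the block form $\left(\begin{smallmatrix} a+nb & b\mathbf{1}^{T} \\ 0 & aI_{n-1} \end{smallmatrix}\right)$. Next, apply the $SL_2(\mathbb{A})$ column operation $\left(\begin{smallmatrix} x & -b \\ y-nx & a+nb \end{smallmatrix}\right)$ to columns $1$ and $2$ to create a $1$ in position $(1,1)$, and clear the first row and column by elementary operations. This yields $[1]\oplus M'$, where $M'$ has $a(a+nb)$ in the corner, $a$'s on the rest of its diagonal, and residual entries of the form $-ab(y-nx)$ in its first row. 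Since each residual is an $a$-multiple and each remaining column of $M'$ has $a$ on its diagonal, a single pass of row operations Row~$1\mathrel{+}=b(y-nx)\cdot\text{Row}_j$ clears them, and a unimodular reordering of the diagonal produces the stated $\mathrm{diag}(1,a,\ldots,a,a(a+nb))$.

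Part (iv) is the main obstacle. The extra entries $b$ at $(1,n)$ and $(n,1)$ of $C_n$ break the clean iteration of (i): after one reduction step the resulting $(n-1)\times(n-1)$ block retains the expected first-row pattern of the path case but ends with a correction of order $b^2$, and its last row carries additional terms involving $x,y$ and $b^2$. My plan is still to iterate the top reduction, carefully tracking these corner corrections, which remain localized in the first and last rows and columns of the shrinking block and grow in $b$-degree with each step. After $n-2$ iterations, the residual $2\times 2$ block is assembled from the accumulated corner contributions together with $f_k$-values; I expect that for $n-2=2q$ the two corrections align symmetrically to give $f_q\left(\begin{smallmatrix} a & 2b \\ 2b & a \end{smallmatrix}\right)$, whereas for $n-2=2q+1$ the asymmetry separates out the $(a+2b)$-eigendirection of $C_n$ and gives $(f_{q+1}-bf_q)\,\mathrm{diag}(1,a+2b)$. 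The hard part will be the bookkeeping required to identify the exact form of the residual $2\times 2$ block and verify the parity dichotomy; the eigenvalue product $\det C_n=\prod_{k=0}^{n-1}(a+2b\cos(2\pi k/n))$ serves as a useful determinant check in both cases.
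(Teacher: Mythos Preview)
Your treatment of (i)--(iii) is essentially the paper's own argument. The paper packages the iteration for (i) and (ii) into intermediate matrices $Z_{k,l}$ and uses exactly your observation that $f_k\equiv a^k\pmod b$ (with your $g_k=f_k+bf_{k-1}$ playing the analogous role in (ii)); for (iii) the paper also reduces to $\left(\begin{smallmatrix}a&b\\ -na&a\end{smallmatrix}\right)\oplus aI_{n-2}$ via explicit elementary matrices and then handles the $2\times2$ block as you do.

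For (iv), however, your plan diverges from the paper's and the difference is exactly the ``hard bookkeeping'' you flag. Reducing only from the top-left forces the last row to carry terms in the B\'ezout data $x,y$ after the very first step, and these compound at each subsequent reduction; you have not shown how to control them, and the claim that the residual $2\times2$ block takes the stated form remains an expectation rather than a proof. The paper avoids this entirely by reducing \emph{symmetrically}: it defines intermediate matrices $Z_{k,l}$ (and a companion $Z'_{k,l}$) that are symmetric in the top and bottom rows, and shows via a pair of unimodular matrices $P_{k,l},Q_{k,l}$ acting on the first \emph{and} last rows/columns that
\[
Z_{k,l}\ \sim_u\ I_1\oplus Z_{k+1,\,l-2}\oplus I_1,
\]
dropping the size by two at each step. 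This keeps the intermediate matrices free of B\'ezout coefficients and makes the parity split automatic: after $q$ steps one lands on an explicit $4\times4$ matrix $Z_{q,4}$ or $Z'_{q,4}$, which is then reduced by hand to the stated $2\times2$ block. The missing idea in your proposal is this two-sided reduction; without it, part~(iv) is a plan, not a proof.
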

\begin{proof}
$(i)$
For all $l\geq 2$ and $1\leq k\leq l-1$, let 
\[
Z_{k,l}(a,b)
=\left(\begin{array}{ccc}
f_k&bf_{k-1} & {\bf 0}_{l-2} \\
b&\\
{\bf 0} & &T_{l-1}(a,b)
\end{array}\right)
\in M_{l}(\mathbb{A}),
\text{ and }
Z_{n,1}(a,b)=(f_n),
\]
where $f_k:=f_k(a,b)$ for all $k\geq -1$ and ${\bf 0}$ 
is the matrix with all the entries equal to $0$.
Note that $Z_{1,n}(a,b)=T_n(a,b)$ 

Now we will prove the following statement:

\begin{Claim}\label{reduction1}
For all $l\geq 2$ and $1\leq k\leq l-1$
\[
Z_{k,l}(a,b)\sim_u I_{1}\oplus Z_{k+1,l-1}(a,b).
\]
\end{Claim}
\begin{proof}
Let $x_1,y_1\in \mathbb{A}$ be a solution of the equation $ax+by=1$.
Moreover, for all $k\geq 1$, let 
\[
x_k=x_1^k \text{ and }y_k=\sum_{i=1}^k \binom{k}{i} a^ib^{k-1-i}x_1^iy_1^{k-i}-x_1^k(f_k-a^k)/b,
\]
that is, $x_k$ and $y_k$ are a solution of the equation $f_kx_k+by_k=1$.

Since
\[
\left(\begin{array}{ccc}
x_k&y_k& {\bf 0}\\
-b&f_k& {\bf 0}\\
{\bf 0} & {\bf 0} & I_{n-k-1}
\end{array}\right)
Z_{k,l}(a,b)
=
\left(\begin{array}{cccc}
1&*&* &*\\
0&af_k-b^2f_{k-1}&bf_{k} & {\bf 0}\\
0&b& \\
{\bf 0} &{\bf 0} && T_{n-k-1}(a,b)
\end{array}\right)
\]
and 
${\rm det}
\left(\begin{array}{cc}
x_k&y_k\\
-b&f_k
\end{array}\right)=f_kx_k+by_k=1$, then $Z_{k,l}(a,b)\sim_u I_{1}\oplus Z_{k+1,l-1}(a,b)$.
\end{proof}

Applying claim~\ref{reduction1}, we get that  $T_{n}(a,b)=Z_{1,n}(a,b)\sim_u I_{n-1}\oplus Z_{n,1}(a,b)$.

$(ii)$
For all $n\geq0$, $m\geq 1$, let 
\[
Z_{k,l}(a,b)=
\left(\begin{array}{ccccc}
f_k+bf_{k-1}&b(f_{k-1}+bf_{k-2})&0&\cdots&0\\
b& a&b&\cdots&0\\
0&b& & & 0 \\
0&0& &T_{l-3}(a,b)  & {\bf 0}\\
\vdots&\vdots& && b\\
0&0 & {\bf 0} & b & a+b
\end{array}\right)
\in M_{l}(\mathbb{A}).
\]
Also, let 
\[
x'_k=x_{k}\text{ and }y'_k=y_k-f_{k-1}x_k,
\] 
that is, $x_k$ and $y_k$ are a solution of the equation $(f_k+bf_{k-1})x'_k+by'_k=1$.
Since
\[
\left(\begin{array}{ccc}
x'_k&y'_k&{\bf 0}\\
-b&f_k+bf_{k-1}&{\bf 0}\\
{\bf 0}&{\bf 0}& I_m
\end{array}\right)
Z_{k,l}(a,b)
=
\left(\begin{array}{ccccc}
1&*&*&\cdots&*\\
0&f_{k+1}+bf_{k}&b(f_k+bf_{k-1})&{\bf 0}&0\\
0&b && & 0\\
0&0 & &T_{l-4}(a,b)  & {\bf 0}\\
\vdots & \vdots & & &b\\
0&0 & {\bf 0}& b& a+b 
\end{array}\right)
\]
for all $l\geq 2$
and 
${\rm det}
\left(\begin{array}{cc}
x'_k&y'_k\\
-b&f_k+bf_{k-1}
\end{array}\right)=
(f_k+bf_{k-1})x_k+by_k=1$,
then $Z_{k,l}(a,b) \sim_u I_1\oplus Z_{k+1,l-1}(a,b)$ for all $l\geq 2$.
Thus
\[
P_{n}(a,b)=Z_{1,n}(a,b)\sim_u 
I_{n} \oplus 
\left(\begin{array}{cc}
f_{n-1}+bf_{n-2}&b(f_{n-2}+bf_{n-3})\\
b&a+b
\end{array}\right).
\]
Therefore, $P_{n+2}(a,b)\sim_u  \mathrm{diag}(1,\ldots,1,(a+2b)f_{n-1}(a,b))$
because
\begin{eqnarray*}
\left(\begin{array}{cc}
x'_{n-1}&y'_{n-1}\\
-b&f_{n-1}+bf_{n-2}
\end{array}\right)
\left(\begin{array}{cc}
f_{n-1}+bf_{n-2}&b(f_{n-2}+bf_{n-3})\\
b&a+b
\end{array}\right)
&=&
\left(\begin{array}{cc}
1&*\\
0&(a+2b)f_{n-1}
\end{array}\right)\\
&\sim_e&
\left(\begin{array}{cc}
1&0\\
0&(a+2b)f_{n-1}(a,b)
\end{array}\right).
\end{eqnarray*}


$(iii)$
We begin proving the following statement:
\begin{Claim}\label{claimK}
If $n\geq 2$, then 
$
K_n(a,b)
\sim_e
\left(\begin{array}{cc}
a&b\\
na &-a
\end{array}\right)
\oplus aI_{n-2}
$.
\end{Claim}
\begin{proof}
It turns out because
\begin{eqnarray*}
P_nK_n(a,b)Q_n
&=&
\left(\begin{array}{cccccc}
b&b&\cdots&b&b&a+b\\
a&a&a&\cdots&a&-(n-1)a\\
0&a&-a&0 &\cdots&0\\
0&0&a&-a&\cdots&\vdots\\
\vdots&\vdots&\ddots&\ddots&\ddots&0\\
0&0&\cdots&0&a&-a
\end{array}\right)
Q_n\\
&=&
\left(\begin{array}{cc}
a&b\\
-na &a
\end{array}\right)
\oplus aI_{n-2},
\end{eqnarray*}
where
\[
P_n
=
\left(\begin{array}{cccccc}
0& 0&\cdots&\cdots&0&1\\
1&1&\cdots&1&1&-(n-1)\\
0& 1&-1&0&\cdots&0\\
0&0&\ddots&\ddots&\ddots&\vdots\\
\vdots&\vdots&\ddots&\ddots&\ddots&0\\
0&0&\cdots&0&1&-1
\end{array}\right)
\text{ and }
Q_n
=
\left(\begin{array}{cccccc}
-n+1&1&-1&-2&\cdots&-n+2\\
1&0&1&1&\cdots&1\\
1&0&0&1&\ddots& \vdots\\
1&0&0&0&1& \vdots\\
\vdots&\vdots&\vdots&\ddots&\ddots&1\\
1&0&0&\cdots&0&0
\end{array}\right)
\]
are elementary matrices. 
\end{proof}

On the other hand, $K_{n}(a,b)\sim_u  \mathrm{diag}(1,a,\ldots,a,a(a+nb))$
because
\[
\left(\begin{array}{cc}
a&b\\
-na&a
\end{array}\right)
\left(\begin{array}{cc}
x_1&-b\\
y_1&a
\end{array}\right)=\left(\begin{array}{cc}
1&0\\
*&a(a+nb)
\end{array}\right)
\sim_e
\left(\begin{array}{cc}
1&0\\
0&a(a+nb)
\end{array}\right)
\]
and
$
{\rm det}
\left(\begin{array}{cc}
x_1&-b\\
y_1&a
\end{array}\right)=
ax_1+by_1=1
$.

$(iv)$
For all $l\geq 4$ and $k\geq 1$, let
\[
Z_{k,l}(a,b)
:=\left(\begin{array}{ccccc}
f_k&bf_{k-1} & {\bf 0}_{l-4}&b^2f_{k-2}&bf_{k-1}\\
b& && &{\bf 0}\\
 & &T_{l-2}(a,b)& \\
{\bf 0}&&&&b\\
bf_{k-1}& b^2f_{k-2}&{\bf 0}_{l-4}&bf_{k-1} &f_k
\end{array}\right)
\in M_{l}(\mathbb{A})
\]
and
\[
Z'_{k,l}(a,b)
:=\left(\begin{array}{ccccc}
f_{k+1}&bf_{k} & {\bf 0}_{l-4}&-b^3f_{k-2}&-b^2f_{k-1}\\
b& && &{\bf 0}\\
 & &T_{l-2}(a,b)& \\
{\bf 0}&&&&b\\
-f_{k}& -bf_{k-1}&{\bf 0}_{l-4}&bf_{k-1} &f_k
\end{array}\right)
\in M_{l}(\mathbb{A}).
\]
Also, let
\[
P_{k,l}(a,b):=
\left(\begin{array}{ccc}
x_k&y_k&{\bf 0}\\
-b & f_k & {\bf 0}\\
{\bf 0}&{\bf 0}&\\
0&-f_{k-1}&I_{l-2}
\end{array}\right)
\text{ and }
Q_{k,l}(a,b):=
I_1 \oplus
\left(\begin{array}{ccc}
I_{l-3} &bf_{k-1}&0\\
& {\bf 0} & {\bf 0}\\
{\bf 0} & f_k & -b\\
{\bf 0} & y_k & x_k
\end{array}\right).
\]
Since
\[
P_{k,l}(a,b) \cdot Z_{k,l}(a,b)=
{\small
\left(\begin{array}{cccccc}
1&*&*&&*&*\\
0 & f_{k+1} & bf_k & {\bf 0}_{l-5}& -b^3f_{k-2}&-b^2f_{k-1}\\
0& b &&& &{\bf 0}\\
{\bf 0}&  &&T_{l-3}(a,b)& &\\
{\bf 0}&{\bf 0}& && &b\\
0&-f_{k}&-bf_{k-1}&{\bf 0}_{l-5}& bf_{k-1}&f_{k}
\end{array}\right)}
\sim_u
I_1 \oplus Z'_{k,l-1}(a,b)
\]
for all $l\geq 5$,
\[
Q_{k,l}(a,b)\cdot (I_1 \oplus Z'_{k,l-1}(a,b))
=
I_1 \oplus
\left(\begin{array}{cccccc}
f_{k+1} & bf_k & {\bf 0}_{l-6}& b^2f_{k-1}&bf_{k} & 0\\
b &&& &{\bf 0} &\\
&&T_{l-4}(a,b)& &&\\
{\bf 0} & && &b&\\
bf_{k}&b^2f_{k-1}&{\bf 0}_{l-6}& bf_{k}&f_{k+1}& 0\\
* & * & &* & * & 1
\end{array}\right)
\]
for all $l-1\geq 5$, and ${\rm det}(P_{k,l}(a,b))={\rm det}(Q_{k,l}(a,b))=1$,
then 
\[
Z_{k,l}(a,b)\sim_u I_1 \oplus Z_{k+1,l-2}(a,b) \oplus I_1 \text{ for all } l\geq 6.
\]

Moreover, since $Z_{1,n}(a,b)=C_n(a,b)$, then
\[
C_n(a,b)\sim_u
\begin{cases}
Z_{q,4}(a,b) & \text{ if } n-2=2q,\\
Z'_{q,4}(a,b) & \text{ if } n-2=2q+1.
\end{cases}
\]

Finally, using similar reductions we get that
\begin{eqnarray*}
Z_{q,4}(a,b)
&\sim_u& 
\left(\begin{array}{cccc}
f_q&bf_{q-1}&b^2f_{q-2}&bf_{q-1}\\
b&a&b&0\\
0&b&a&b\\
0&-f_q&0&f_q
\end{array}\right)
\sim_u 
\left(\begin{array}{cccc}
1&0&0&0\\
0&f_{q+1}&bf_q-b^3f_{q-2}&-b^2f_{q-1}\\
0&b&a&b\\
0&-f_q&0&f_q
\end{array}\right)\\
&\sim_u&
\left(\begin{array}{cccc}
1&0&0&0\\
0&af_q&2bf_q&0\\
0&b&a&b\\
0&-f_q&0&f_q
\end{array}\right)
\sim_u
\left(\begin{array}{cccc}
1&0&0&0\\
0&af_q&2bf_q&0\\
0&2bf_q&af_q&0\\
0&0&0&1
\end{array}\right)
\sim_u 
I_2\oplus f_q\left(\begin{array}{cc}
a&2b\\
2b&a
\end{array}\right)
\end{eqnarray*}
and
\begin{eqnarray*}
Z'_{q,4}(a,b)
&\sim_u&
{\small 
\left(\begin{array}{cccc}
f_{q+1}&bf_q+b^2f_{q-1}&abf_{q-1}-b^3f_{q-2}&0\\
b&a&b&0\\
0&b&a&b\\
-f_q&-bf_{q-1}&bf_{q-1}&f_q
\end{array}\right)}
\sim_u
\left(\begin{array}{cccc}
f_{q+1}&bf_q+b^2f_{q-1}&bf_q& 0\\
b&a&b&0\\
bf_q& bf_q+b^2f_{q-1}& f_{q+1}&0\\
0&0&0&1
\end{array}\right)\\
&\sim_u&
{\small
\left(\begin{array}{cccc}
1&0&0&0\\
0&af_{q+1}-b^2f_q-b^3f_{q-1}& bf_{q+1}-b^2f_q&0\\
0&bf_q+b^2f_{q-1}-af_q&f_{q+1}-bf_{q}&0\\
0&0& 0&1
\end{array}\right)}
\sim_u 
I_2\oplus \big(f_{q+1}-bf_q\big)
\left(\begin{array}{cc}
a+b&b\\
-1&1\\
\end{array}\right)\\
&\sim_u& 
I_2\oplus \big(f_{q+1}-bf_q\big)
\left(\begin{array}{cc}
1&0\\
0&a+2b
\end{array}\right).
\end{eqnarray*}
\end{proof}

\begin{Remark}
In \cite[pag. 44]{corralesthesis}, a simpler proof of theorem~\ref{prin} (iii) when $\mathbb{A}$ is a principal ideal domain can be found . 
\end{Remark}

The next Lemma give us some useful properties of the polynomials $f_n(x,y)$. 
\begin{Lemma}
If $n\geq 1$, then
\begin{description}
\item[(i)] $f_n(x,y)=\sum_{i=0}^{\lfloor \frac{n}{2}\rfloor} (-1)^i\binom{n-i}{i} x^{n-2i}y^{2i}$,
\item[(ii)] $f_n(x+y,-1)-f_n(y,-1)=x\sum_{i=0}^{n-1} f_i(x+y,-1)f_{n-i}(y,-1)$,
\item[(iii)] $f_n(x,y)=f_k(x,y)f_{n-k}(x,y)-y^2f_{k-1}(x,y)f_{n-k-1}(x,y)$,
\item[(iv)] $x^kf_n(x,y)=\sum_{i=0}^k \binom{k}{i} x^{2i}f_{n+k-2i}(x,y)$.
\end{description}
\end{Lemma}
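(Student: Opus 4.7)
My plan is to prove all four identities by short inductions, using the defining recurrence $f_n = xf_{n-1} - y^2 f_{n-2}$ as the engine. The unifying tool is the rational generating function $F(x,y,t) := \sum_{n\ge 0} f_n(x,y)\,t^n = 1/(1 - xt + y^2t^2)$, which is immediate from the recurrence together with $f_{-1}=0$, $f_0=1$, and which will be especially handy for (ii).

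For (i), I would induct on $n$: the cases $n\in\{0,1\}$ are direct, and for $n\ge 2$ I would substitute the claimed closed form into $xf_{n-1}(x,y) - y^2 f_{n-2}(x,y)$, shift the index in the second sum by $i\mapsto i-1$, and combine the two sums via the Pascal identity $\binom{n-1-i}{i}+\binom{n-1-i}{i-1}=\binom{n-i}{i}$. For (iv), I would induct on $k$: multiplying the inductive hypothesis by $x$ and applying the recurrence in the form $xf_m = f_{m+1}+y^2 f_{m-1}$ to each summand produces two sums that, after a reindex, combine via the Pascal identity $\binom{k}{i}+\binom{k}{i-1}=\binom{k+1}{i}$. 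Both parts are routine binomial bookkeeping with no real obstacle.

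For (iii), the cleanest proof is a matrix trick. The recurrence is equivalent to $\binom{f_n}{f_{n-1}} = M \binom{f_{n-1}}{f_{n-2}}$ with $M = \bigl(\begin{smallmatrix} x & -y^2 \\ 1 & 0 \end{smallmatrix}\bigr)$, so a trivial induction gives $M^n = \bigl(\begin{smallmatrix} f_n & -y^2 f_{n-1} \\ f_{n-1} & -y^2 f_{n-2} \end{smallmatrix}\bigr)$ for all $n\ge 1$. Reading off the $(1,1)$-entry of the identity $M^n = M^k \cdot M^{n-k}$ immediately yields the addition formula $f_n = f_k f_{n-k} - y^2 f_{k-1} f_{n-k-1}$.

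Part (ii) is where the main work lies, since the simultaneous appearance of a shift in the argument and a full convolution on the right makes a direct induction unpleasant. The cleanest route is to specialize the generating function above: setting $G(z,t) := F(z,-1,t) = 1/(1 - zt + t^2)$, a direct subtraction of fractions gives
\[ G(x+y,t) - G(y,t) = \frac{xt}{(1-(x+y)t+t^2)(1-yt+t^2)} = xt\,G(x+y,t)\,G(y,t). \]
Extracting the coefficient of $t^n$ on each side and expanding the right-hand side as a Cauchy product then yields the claimed identity (with the convolution running over $i=0,\ldots,n-1$, as forced by the explicit $xt$ factor). The only substantive point is spotting the numerator cancellation that produces this $xt$; after that, only bookkeeping remains.
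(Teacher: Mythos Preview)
Your arguments are sound and far more detailed than the paper's own proof, which is the single sentence ``It follows using induction on $n$.'' Your Pascal-identity inductions for (i) and (iv), the transfer-matrix identity $M^n=M^kM^{n-k}$ for (iii), and the generating-function subtraction for (ii) are all valid and pleasant alternatives to a bare recurrence chase.

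That said, you should be aware that parts (ii) and (iv), \emph{as printed}, are false, and your methods will not reproduce them verbatim. In (iv), carrying out exactly the step you describe---multiply the inductive hypothesis by $x$, apply $xf_m=f_{m+1}+y^{2}f_{m-1}$, reindex, and use Pascal---produces
\[
x^{k}f_{n}(x,y)=\sum_{i=0}^{k}\binom{k}{i}\,y^{2i}\,f_{n+k-2i}(x,y),
\]
with $y^{2i}$ in place of the printed $x^{2i}$; already at $k=1$ the stated version would assert $xf_n=f_{n+1}+x^{2}f_{n-1}$, contradicting the defining recurrence. Likewise in (ii), extracting the coefficient of $t^{n}$ from your identity $G(x+y,t)-G(y,t)=xt\,G(x+y,t)G(y,t)$ gives
\[
f_{n}(x+y,-1)-f_{n}(y,-1)=x\sum_{i=0}^{n-1} f_{i}(x+y,-1)\,f_{\,n-1-i}(y,-1),
\]
with second subscript $n-1-i$ rather than the printed $n-i$; the stated version already fails at $n=1$, where it would read $x=xy$. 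So your proofs are correct, but they establish the corrected identities above; it is worth stating this discrepancy explicitly rather than claiming to have proved (ii) and (iv) as written.
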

\begin{proof}
It follows using induction on $n$. \end{proof}

\begin{Remark}
Note that, $f_n(x,0)=x^n$.
\end{Remark}

If $\mathbb{A}$ is a principal ideal domain and $a,b\in \mathbb{A}$, then the equation $ax+by=1$ has a solution if and only if ${\rm gcd}(a,b)=1$.  

\begin{Corollary}\label{prinpid}
Let $\mathbb{A}$ be a principal ideal domain, $a,b\in \mathbb{A}$ with $r={\rm gcd}(a,b)$, $a=ra'$, and $b=rb'$.
Then
\begin{description}
\item[(i)] $T_n(a,b)\sim_u {\rm diag}(r,\ldots,r,rf_n(a',b'))$ for all $n\geq 2$,

\item[(ii)]  $P_n(a,b)\sim_u {\rm diag}(r,\ldots,r,(a+2b)f_{n-1}(a',b'))$  for all $n\geq 2$,

\item[(iii)] $K_n(a,b)\sim_u {\rm diag}(r,a,\ldots,a,a'(a+nb))$  for all $n\geq 2$, and

\item[(iv)] $C_n(a,b)\sim_{u} rI_{n-2} \oplus C$ for all $n\geq 4$, where
\[
C=
\begin{cases}
f_q(a',b')\left(\begin{array}{cc}
a&2b\\
2b&a
\end{array}\right) & \text{ if } n-2=2q,\\
\big(f_{q+1}(a',b')-b'f_q(a',b')\big)\left(\begin{array}{cc}
r&0\\
0&a+2b
\end{array}\right) & \text{ if } n-2=2q+1.
\end{cases}
\]
\end{description}
\end{Corollary}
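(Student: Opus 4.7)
The plan is to reduce Corollary 3.3 to Theorem 3.1 by factoring out the gcd. Let $r = \gcd(a,b)$ with $a = ra'$, $b = rb'$, so that $\gcd(a',b') = 1$. Since $\mathbb{A}$ is a PID, Bezout's identity gives $x,y \in \mathbb{A}$ with $a'x + b'y = 1$, so the hypothesis of Theorem 3.1 is satisfied for the pair $(a',b')$.

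The first observation is that each of the four matrix families scales cleanly: directly from the definitions one checks
\[
T_n(a,b) = r\, T_n(a',b'), \quad P_n(a,b) = r\, P_n(a',b'), \quad K_n(a,b) = r\, K_n(a',b'), \quad C_n(a,b) = r\, C_n(a',b'),
\]
using $a+b = r(a'+b')$ for the diagonal of $P_n$ and $K_n$. Next I would note that unitary equivalence is preserved under scalar multiplication: if $N = PMQ$ with $P \in SL_n(\mathbb{A})$, $Q \in SL_m(\mathbb{A})$, then $rN = P(rM)Q$, so $M \sim_u N$ implies $rM \sim_u rN$.

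With these two observations the proof becomes a substitution exercise. For part (i), apply Theorem 3.1(i) to $T_n(a',b')$ and multiply by $r$ to obtain $\mathrm{diag}(r,\ldots,r,rf_n(a',b'))$. Part (iv) works the same way in both parity cases: one multiplies the diagonal block $I_{n-2}$ by $r$ to get $rI_{n-2}$, and absorbs the $r$ into the $2\times 2$ block, using $ra' = a$, $2rb' = 2b$, and $r(a'+2b') = a+2b$. Part (ii) is similar, observing that the scalar $r(a'+2b')f_{n-1}(a',b') = (a+2b)f_{n-1}(a',b')$ matches the stated last diagonal entry.

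Part (iii) requires one extra bookkeeping step: Theorem 3.1(iii) gives $K_n(a',b') \sim_u \mathrm{diag}(1, a', \ldots, a', a'(a'+nb'))$, so multiplying by $r$ produces $\mathrm{diag}(r, ra', \ldots, ra', ra'(a'+nb'))$. The first two entries rewrite as $r$ and $a$, and the last one as $a'\cdot r(a'+nb') = a'(a+nb)$, giving exactly the stated form. No step presents any real obstacle — the only care needed is the algebraic repackaging of the scalar $r$ inside the diagonal and $2\times 2$ block entries.
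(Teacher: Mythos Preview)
Your proof is correct and follows exactly the same approach as the paper: factor each matrix as $X_n(a,b)=rX_n(a',b')$, invoke Bezout in the PID to verify the hypothesis of Theorem~\ref{prin} for $(a',b')$, and then scale the resulting diagonal forms by $r$. The paper's proof is terser and omits the explicit repackaging you carry out for parts (ii)--(iv), but the argument is the same.
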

\begin{proof}
Let $X_n(a,b)$ be either one of the matrices $T_n(a,b)$, $P_n(a,b)$, $K_n(a,b)$, or $C_n(a,b)$, then $X_n(a,b)=rX_n(a',b')$.
On the other hand, since $r={\rm gcd}(a,b)$  if and only if $1={\rm gcd}(a',b')$ if and only if
the equation $a'x+b'y=1$ has solution in $\mathbb{A}$.
Then, we get the result applying theorem~\ref{prin} to $X_n(a',b')$.
\end{proof}


\section{Some applications}\label{sec2}
In this section we will apply the equivalences of the matrices $K_n(a,b)$, $C_n(a,b)$, $P_n(a,b)$ and $T_n(a,b)$ obtained in the previous section 
to the cases when $a$ and $b$ are in the ring of integers and in the subring of matrices of the form $K_n(a,b)$ where $a,b$
are in a commutative ring with identity $\mathbb{A}$.

At this point we need to introduce some definitions.
Given a simple graph $\mathcal{G}$ and a natural number $l\geq 1$, the $l$-duplication of $\mathcal{G}$, denoted by $\mathcal{G}(l)$,
is the multigraph obtained from $\mathcal{G}$ when we replace every edge of  $\mathcal{G}$ by $l$ parallel edges.
Note that $L(\mathcal{G}(l))=lL(\mathcal{G})$ for any graph $\mathcal{G}$.

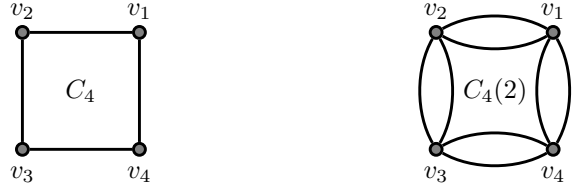
\begin{figure}[h] \centering
		\begin{tikzpicture}[line width=1.1pt, scale=1.1]
			\tikzstyle{every node}=[inner sep=0pt, minimum width=4.5pt] 
 			\draw (0,0) {
			+(45:1) node[draw, circle, fill=gray] (v1) {}
 			+(135:1) node[draw, circle, fill=gray] (v2) {}
 			+(225:1) node[draw, circle, fill=gray] (v3) {}
 			+(315:1) node[draw, circle, fill=gray] (v4) {}
 			(v1) to (v2) to (v3) to (v4) to (v1)
 			(v1)+(0,0.25) node {\small $v_1$}
 			(v2)+(0,0.25) node {\small $v_2$}
 			(v3)+(0,-0.3) node {\small $v_3$}
 			(v4)+(0,-0.3) node {\small $v_4$}
			(0,0) node {\small $C_4$}
			};
			
			 \draw (5,0) {
			+(45:1) node[draw, circle, fill=gray] (v1) {}
 			+(135:1) node[draw, circle, fill=gray] (v2) {}
 			+(225:1) node[draw, circle, fill=gray] (v3) {}
 			+(315:1) node[draw, circle, fill=gray] (v4) {}
			(v1) .. controls +(150:0.5) and +(30:0.5).. (v2)
			(v1) .. controls +(210:0.5) and +(-30:0.5).. (v2)
			(v2) .. controls +(240:0.5) and +(120:0.5).. (v3)
			(v2) .. controls +(300:0.5) and +(60:0.5).. (v3)
			(v4) .. controls +(150:0.5) and +(30:0.5).. (v3)
			(v4) .. controls +(210:0.5) and +(-30:0.5).. (v3)
			(v1) .. controls +(240:0.5) and +(120:0.5).. (v4)
			(v1) .. controls +(300:0.5) and +(60:0.5).. (v4)
 			(v1)+(0,0.25) node {\small $v_1$}
 			(v2)+(0,0.25) node {\small $v_2$}
 			(v3)+(0,-0.3) node {\small $v_3$}
 			(v4)+(0,-0.3) node {\small $v_4$}
			+(-0.7,0.7) node {\small $C_4(2)$}
			};		
		\end{tikzpicture}
	\caption{\small The simple cycle $C_4$ and its $2$-duplication.}
	\label{duplication}
\end{figure}

Given a graph $\mathcal{G}$ and a natural number $k$, the $k$-cone of $\mathcal{G}$, denoted by $c_k(\mathcal{G})$,
is the multigraph obtained from $\mathcal{G}$ by adding a new vertex $s$ and adding $k$ parallel edges between $s$ and all the vertices of $\mathcal{G}$.
Note that $L(c_k(\mathcal{G}),s)=L(\mathcal{G})+kI_{|V(\mathcal{G})|}$ for any graph $\mathcal{G}$.

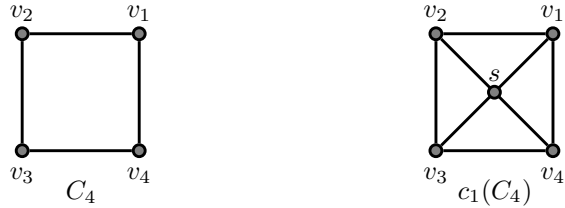
\begin{figure}[h] \centering
		\begin{tikzpicture}[line width=1.1pt, scale=1.1]
			\tikzstyle{every node}=[inner sep=0pt, minimum width=4.5pt] 
 			\draw (0,0) {
			+(45:1) node[draw, circle, fill=gray] (v1) {}
 			+(135:1) node[draw, circle, fill=gray] (v2) {}
 			+(225:1) node[draw, circle, fill=gray] (v3) {}
 			+(315:1) node[draw, circle, fill=gray] (v4) {}
 			(v1) to (v2) to (v3) to (v4) to (v1)
 			(v1)+(0,0.25) node {\small $v_1$}
 			(v2)+(0,0.25) node {\small $v_2$}
 			(v3)+(0,-0.3) node {\small $v_3$}
 			(v4)+(0,-0.3) node {\small $v_4$}
			+(-0.7,-0.5) node {\small $C_4$}
			};
			
			 \draw (5,0) {
			+(45:1) node[draw, circle, fill=gray] (v1) {}
 			+(135:1) node[draw, circle, fill=gray] (v2) {}
 			+(225:1) node[draw, circle, fill=gray] (v3) {}
 			+(315:1) node[draw, circle, fill=gray] (v4) {}
 			+(0:0) node[draw, circle, fill=gray] (s) {}
 			(v1) to (v2) to (v3) to (v4) to (v1)
			(v1) to (v3) (v2) to (v4)
 			(s)+(0,0.2) node {\small $s$}
 			(v1)+(0,0.25) node {\small $v_1$}
 			(v2)+(0,0.25) node {\small $v_2$}
 			(v3)+(0,-0.3) node {\small $v_3$}
 			(v4)+(0,-0.3) node {\small $v_4$}
			+(-0.7,-0.5) node {\small $c_1(C_4)$}
			};	
		\end{tikzpicture}
	\caption{\small The cycle $C_4$ and its $1$-cone.}
	\label{cone}
\end{figure}

The most direct application of the results obtained in the first section is when the base ring $\mathbb{A}$ is the ring of integers and the matrices 
are the reduced Laplacian matrix of the $n$-cone of the 
thick path, the thick cycle and the thick complete graph.

\newpage
\begin{Corollary}\label{CaminoGordo}
For all $m\geq 0$, $l\geq 1$, and $n\geq 2$, 
\begin{figure}[h]\centering
\begin{tikzpicture}[line width=1.1pt, scale=1]
	\draw (0,0) {
	+(0,0) node[draw, circle, fill=gray, inner sep=0pt, minimum width=4pt] (s) {}
	+(-1.8,-1.5) node[draw, circle, fill=gray,inner sep=0pt, minimum width=4pt]  (v1) {}
	+(-0.9,-1.5) node[draw, circle, fill=gray,inner sep=0pt, minimum width=4pt]  (v2) {}
	+(0.9,-1.5) node[draw, circle, fill=gray,inner sep=0pt, minimum width=4pt]  (v3) {}
	+(1.8,-1.5) node[draw, circle, fill=gray,inner sep=0pt, minimum width=4pt]  (v4) {}
	+(-0.3,-1.5) node[draw, circle, fill=black, inner sep=0pt, minimum width=1.6pt]  {}
	+(0,-1.5) node[draw, circle, fill=black, inner sep=0pt, minimum width=1.6pt]  {}
	+(0.3,-1.5) node[draw, circle, fill=black, inner sep=0pt, minimum width=1.6pt]  {}
	 (s) to (v1) (s) to (v2) (s) to (v3) (s) to (v4)
	 (v1) to (v2) (v3) to (v4)
	 (s)+(0,+0.3) node {$s$}
	 (v1)+(0,-0.3) node {$v_1$}
	 (v2)+(0,-0.3) node {$v_2$}
	 (v3)+(0,-0.3) node {$v_{n-1}$} 
	 (v4)+(0.1,-0.3) node {$v_n$}
	 (v1)+(0.65,0.8) node {$m$}
	 (v1)+(1.55,0.6) node {$m$}
	 (v1)+(2.1,0.6) node {$m$}    
	 (v1)+(2.95,0.8) node {$m$}  
	 (v1)+(0.6,0.2) node {$l$} 
	 (v3)+(0.4,0.2) node {$l$} 
	 };
\end{tikzpicture}
\end{figure}
let $c_m(\mathcal{P}_n(l))$ be the $m$-cone of the thick path with all the edges with multiplicities equal to $l$.  
Then 
\[
K(c_m(\mathcal{P}_n(l)))=\mathbb{Z}_r^{n-1} \oplus \mathbb{Z}_{mf_{n-1}(m+2l,-l)/r^{n-1}},
\]
where $r={\rm gcd}(l,m)$.
\end{Corollary}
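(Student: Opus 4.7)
The plan is to realize the reduced Laplacian of $c_m(\mathcal{P}_n(l))$ as the matrix $P_n(m+2l,-l)$ and then apply Corollary~\ref{prinpid}(ii) over $\mathbb{A}=\mathbb{Z}$.

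First I would identify the matrix. Since $L(\mathcal{G}(l))=lL(\mathcal{G})$ and $L(c_m(\mathcal{G}),s)=L(\mathcal{G})+mI$, the reduced Laplacian is $L(c_m(\mathcal{P}_n(l)),s)=lL(\mathcal{P}_n)+mI_n$. The degrees in $\mathcal{P}_n$ are $1,2,\ldots,2,1$, so this matrix has diagonal $(m+l,m+2l,\ldots,m+2l,m+l)$ and off-diagonal entry $-l$ between consecutive indices. Comparing with the definition of $P_n(a,b)$, whose diagonal is $(a+b,a,\ldots,a,a+b)$ and off-diagonal is $b$, this is exactly $P_n(m+2l,-l)$.

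Next I would apply Corollary~\ref{prinpid}(ii) with $a=m+2l$ and $b=-l$. Here $r:=\gcd(a,b)=\gcd(m+2l,l)=\gcd(m,l)$, matching the $r$ in the statement, and $a+2b=m$. Writing $a=ra'$, $b=rb'$, the corollary gives
\[
P_n(m+2l,-l)\sim_u \operatorname{diag}\bigl(r,\ldots,r,\,m f_{n-1}(a',b')\bigr).
\]
Because the polynomial $f_{n-1}(x,y)=\sum_{i=0}^{\lfloor (n-1)/2\rfloor}(-1)^i\binom{n-1-i}{i}x^{n-1-2i}y^{2i}$ is homogeneous of degree $n-1$ (Lemma~2.2(i)), we have $f_{n-1}(a',b')=f_{n-1}(a/r,b/r)=f_{n-1}(m+2l,-l)/r^{n-1}$. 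Hence the last invariant factor is exactly $m f_{n-1}(m+2l,-l)/r^{n-1}$.

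Finally, since $P_n(m+2l,-l)$ is symmetric, the cokernel of its transpose coincides with the cokernel of the diagonal form above, yielding
\[
K(c_m(\mathcal{P}_n(l)))\;\cong\;\mathbb{Z}_r^{\,n-1}\oplus\mathbb{Z}_{mf_{n-1}(m+2l,-l)/r^{n-1}}.
\]
There is no real obstacle here: the proof is a direct translation once one recognizes the reduced Laplacian as $P_n(m+2l,-l)$ and invokes the homogeneity of $f_{n-1}$ to rewrite the last invariant factor in terms of the original parameters. The only thing worth being careful about is the endpoint diagonal entries $m+l$ (rather than $m+2l$), which is precisely what forces the $P_n$ shape instead of $T_n$.
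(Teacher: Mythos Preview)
Your proof is correct and follows essentially the same approach as the paper: identify $L(c_m(\mathcal{P}_n(l)),s)=P_n(m+2l,-l)$, note $\gcd(m+2l,-l)=\gcd(l,m)=r$, and invoke Corollary~\ref{prinpid}(ii). You spell out the homogeneity of $f_{n-1}$ to reconcile $f_{n-1}(a',b')$ with $f_{n-1}(m+2l,-l)/r^{n-1}$, a step the paper leaves implicit.
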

\begin{proof}
Since the reduced Laplacian matrix of $c_m(\mathcal{P}_n(l))$, $L(c_m(\mathcal{P}_n(l)),s)$, is equal to $P_n(m+2l,-l)$
and ${\rm gcd}(m+2l,-l)={\rm gcd}(l,m)=r$, then by corollary~\ref{prinpid} $(ii)$ we get the result.
\end{proof}

\begin{Corollary}\label{CicloGordo}
For all $m\geq 0$, $l\geq 1$, and $n\geq 4$, let
\begin{figure}[h]\centering
\begin{tikzpicture}[line width=1.1pt, scale=1]
	\draw (0,0) {
	+(0,0) node[draw, circle, fill=gray, inner sep=0pt, minimum width=4pt] (s) {}
	+(60:1.4) node[draw, circle, fill=gray,inner sep=0pt, minimum width=4pt]  (v1) {}
	+(0:1.4) node[draw, circle, fill=gray,inner sep=0pt, minimum width=4pt]  (v2) {}
	+(-60:1.4) node[draw, circle, fill=gray,inner sep=0pt, minimum width=4pt]  (v3) {}
	+(-120:1.4) node[draw, circle, fill=gray,inner sep=0pt, minimum width=4pt]  (v4) {}
	+(-180:1.4) node[draw, circle, fill=gray,inner sep=0pt, minimum width=4pt]  (v5) {}
	+(-240:1.4) node[draw, circle, fill=gray,inner sep=0pt, minimum width=4pt]  (v6) {}
	+(80:1.4) node[draw, circle, fill=black, inner sep=0pt, minimum width=1.6pt]  {}
	+(90:1.4) node[draw, circle, fill=black, inner sep=0pt, minimum width=1.6pt]  {}
	+(100:1.4) node[draw, circle, fill=black, inner sep=0pt, minimum width=1.6pt]  {}
	 (s) to (v1) (s) to (v2) (s) to (v3) (s) to (v4) (s) to (v5) (s) to (v6)
	 (v1) to (v2) to (v3) to (v4) to (v5) to (v6)
	 (s)+(0,0.3) node {$s$}
	 (v1)+(0.2,0.2) node {$v_3$}
	 (v2)+(0.3,0) node {$v_2$}
	 (v3)+(0,-0.3) node {$v_{1}$} 
	 (v4)+(0.1,-0.3) node {$v_{n}$}
	 (v5)+(-0.3,-0.2) node {$v_{n-1}$}
	 (v6)+(-0.2,0.2) node {$v_{n-2}$}
	 (s)+(40:0.7) node {$m$}
	 (s)+(-15:0.7) node {$m$}
	 (s)+(-80:0.7) node {$m$}    
	 (s)+(-140:0.7) node {$m$} 
	 (s)+(-195:0.7) node {$m$}  
	 (s)+(-260:0.7) node {$m$}   
	 (s)+(30:1.4) node {$l$} 
	 (s)+(-30:1.4) node {$l$} 
	 (s)+(-90:1.4) node {$l$} 
	 (s)+(-150:1.4) node {$l$} 
	 (s)+(-210:1.4) node {$l$} 
	 };
\end{tikzpicture}
\vspace{-3mm}
\end{figure}
$c_m(\mathcal{C}_n(l))$ be the $m$-cone of the thick cycle where all the edges has multiplicity equal to $l$.
Then 
\[
K(c_m(\mathcal{C}_n(l)))=
\begin{cases}
\mathbb{Z}_r^{n-2} \oplus \mathbb{Z}_{rs_q} \oplus \mathbb{Z}_{ms_q} \text{ if } n-2=2q+1,\\
\mathbb{Z}_r^{n-2} \oplus \mathbb{Z}_{rt_q} \oplus \mathbb{Z}_{m(m+4l)t_q/r}   \text{ if } n-2=2q \text{ and } m/r \text{ is odd},\\
\mathbb{Z}_r^{n-2} \oplus \mathbb{Z}_{2rt_q} \oplus \mathbb{Z}_{m(m+4l)t_q/2r} \text{ if } n-2=2q \text{ and } m/r \text{ is even},
\end{cases}
\]
where $r={\rm gcd}(l,m)$, $s_q=(f_{q+1}(m+2l,-l)+lf_q(m+2l,-l))/r^{q+1}$, and $t_q=f_q(m+2l,-l)/r^q$.
\end{Corollary}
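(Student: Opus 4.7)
The plan is to identify the reduced Laplacian of $c_m(\mathcal{C}_n(l))$ as a matrix of the form $C_n(a,b)$ and invoke Corollary~\ref{prinpid}~(iv). Since $L(\mathcal{C}_n)=2I_n-A(\mathcal{C}_n)$, the identities $L(\mathcal{G}(l))=lL(\mathcal{G})$ and $L(c_m(\mathcal{G}),s)=L(\mathcal{G})+mI$ yield $L(c_m(\mathcal{C}_n(l)),s)=(m+2l)I_n-lA(\mathcal{C}_n)=C_n(m+2l,-l)$. Setting $a=m+2l$ and $b=-l$, we obtain $\gcd(a,b)=\gcd(m+2l,-l)=\gcd(m,l)=r$ and $a+2b=m$. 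The homogeneity $f_n(rx,ry)=r^n f_n(x,y)$, immediate by induction from the recurrence, lets me rewrite $f_q(a',b')$ as $t_q$ and $f_{q+1}(a',b')-b'f_q(a',b')$ as $s_q$, where $a'=a/r$ and $b'=b/r$, matching the notation of the statement.

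In the odd subcase $n-2=2q+1$, Corollary~\ref{prinpid}~(iv) immediately provides the block $s_q\,\mathrm{diag}(r,a+2b)=\mathrm{diag}(rs_q,ms_q)$, which is already diagonal. Since $r\mid m$, the divisibility $rs_q\mid ms_q$ holds, so this is the Smith normal form and the first case of the formula follows directly.

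The main obstacle is the even subcase $n-2=2q$, where the block from Corollary~\ref{prinpid}~(iv) is $t_q\begin{pmatrix} m+2l & -2l \\ -2l & m+2l\end{pmatrix}$, a non-diagonal $2\times 2$ matrix. I would compute its Smith form by hand: its determinant equals $m(m+4l)$ and the gcd of its entries equals $\gcd(m+2l,2l)=\gcd(m,2l)$. Writing $m=rm'$ and $l=rl'$ with $\gcd(m',l')=1$, a short parity argument gives $\gcd(m,2l)=r$ when $m'$ is odd, and $\gcd(m,2l)=2r$ when $m'$ is even (in which case $l'$ must be odd and $\gcd(m',2l')=2$). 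Elementary row and column operations therefore reduce the block to $\mathrm{diag}(r,m(m+4l)/r)$ or to $\mathrm{diag}(2r,m(m+4l)/(2r))$, respectively; scaling by $t_q$ and verifying the divisibility between consecutive invariant factors (using $r\mid m$ and $r\mid l$) completes the Smith form computation and yields the two even subcases of the corollary.
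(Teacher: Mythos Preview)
Your proposal is correct and follows essentially the same route as the paper: identify $L(c_m(\mathcal{C}_n(l)),s)=C_n(m+2l,-l)$, apply Corollary~\ref{prinpid}(iv), and then reduce the residual $2\times 2$ block to Smith form via the parity of $m/r$. You add a few details the paper leaves implicit (the homogeneity $f_n(rx,ry)=r^nf_n(x,y)$ to match $s_q,t_q$, and the divisibility checks for the invariant factors), but the argument is the same.
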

\begin{proof}
Since $L(c_m(\mathcal{C}_n(l)),s)$ is equal to $C_n(m+2l,-l)$ and ${\rm gcd}(m+2l,-l)={\rm gcd}(l,m)=r$, then by corollary~\ref{prinpid} $(iv)$,
$C_n(m+2l,-l)\sim_{u} rI_{n-2} \oplus C$, where
\[
C=
\begin{cases}
f_q(m+2l,-l)/r^q
\left(
\begin{array}{cc}
m+2l&-2l\\
-2l&m+2l
\end{array}
\right) & \text{ if } n-2=2q,\\
\big(f_{q+1}(m+2l,-l)+lf_q(m+2l,-l)\big)/r^{q+1}
\left(\begin{array}{cc}
r&0\\
0&m
\end{array}\right) & \text{ if } n-2=2q+1.
\end{cases}
\]
Finally, 
\[
\left(
\begin{array}{cc}
m+2l&-2l\\
-2l&m+2l
\end{array}
\right)
\sim_u
\begin{cases}
\left(
\begin{array}{cc}
r&0\\
0&(m^2+4ml)/r
\end{array}
\right)
& \text{ if } m/r \text{ is odd},\\
\\
\left(
\begin{array}{cc}
2r&0\\
0&(m^2+4ml)/2r
\end{array}
\right)
& \text{ if } m/r \text{ is even}.
\end{cases}
\]
\end{proof}

\begin{Corollary}\label{CompletaGorda}
For all $m\geq 0$, $l\geq 1$, and $n\geq 4$, let
$c_m(\mathcal{K}_n(l))$ be the $m$-cone of the thick complete graph where all the edges have multiplicity equal to $l$.
Then 
\[
K(c_m(\mathcal{K}_n(l)))=\mathbb{Z}_r \oplus \mathbb{Z}_{m+nl}^{n-2} \oplus \mathbb{Z}_{m(m+nl)/r},
\]
where $r={\rm gcd}(l,m)$.
\end{Corollary}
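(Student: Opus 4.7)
The plan is to follow exactly the same template used to prove Corollaries~\ref{CaminoGordo} and \ref{CicloGordo}: identify the reduced Laplacian of $c_m(\mathcal{K}_n(l))$ as an instance of the matrix $K_n(a,b)$ and then invoke Corollary~\ref{prinpid}(iii).

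First I would identify the parameters. Because $L(c_k(\mathcal{G}),s)=L(\mathcal{G})+kI$ and $L(\mathcal{G}(l))=lL(\mathcal{G})$, and because $L(\mathcal{K}_n)=(n-1)I_n-A(\mathcal{K}_n)$, we obtain
\[
L(c_m(\mathcal{K}_n(l)),s)=lL(\mathcal{K}_n)+mI_n=(m+(n-1)l)I_n-lA(\mathcal{K}_n).
\]
Comparing with $K_n(a,b)=(a+b)I_n+bA(\mathcal{K}_n)$ forces $b=-l$ and $a+b=m+(n-1)l$, so $a=m+nl$. Thus $L(c_m(\mathcal{K}_n(l)),s)=K_n(m+nl,-l)$.

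Next I would compute the invariants appearing in Corollary~\ref{prinpid}(iii). Since $\gcd(m+nl,-l)=\gcd(m,l)=r$, the reduced parameters are $a'=(m+nl)/r$ and $b'=-l/r$. Applying the corollary gives
\[
K_n(m+nl,-l)\sim_u\mathrm{diag}\bigl(r,\,m+nl,\ldots,m+nl,\,a'(a+nb)\bigr).
\]
A direct calculation yields $a+nb=(m+nl)+n(-l)=m$, hence $a'(a+nb)=m(m+nl)/r$. Reading off the cokernel of this diagonal matrix produces the claimed decomposition
\[
K(c_m(\mathcal{K}_n(l)))=\mathbb{Z}_r\oplus\mathbb{Z}_{m+nl}^{\,n-2}\oplus\mathbb{Z}_{m(m+nl)/r}.
\]

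There is essentially no obstacle here: the only content is the bookkeeping that matches $(a,b)=(m+nl,-l)$ correctly and the easy identity $\gcd(m+nl,l)=\gcd(m,l)$. Corollary~\ref{prinpid}(iii) already does the heavy lifting, so the proof will be a couple of lines, parallel in structure to Corollaries~\ref{CaminoGordo} and~\ref{CicloGordo}.
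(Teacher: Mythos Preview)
Your proposal is correct and matches the paper's proof essentially line for line: identify $L(c_m(\mathcal{K}_n(l)),s)=K_n(m+nl,-l)$, note that $\gcd(m+nl,-l)=\gcd(l,m)=r$, and apply Corollary~\ref{prinpid}(iii). The paper's version is even terser, omitting the explicit computations of $a,b$ and of $a'(a+nb)$ that you spell out.
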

\begin{proof}
Since the reduced Laplacian matrix, $L(c_m(\mathcal{K}_n(l)),s)$, is equal to $K_n(m+nl,-l)$ and ${\rm gcd}(m+nl,-l)={\rm gcd}(l,m)=r$, 
then by corollary~\ref{prinpid} $(iii)$ we get the result.
\end{proof}

\subsection{The subring $K_n(\mathbb{A})$}

In this part we will turn our attention to the case when $\mathbb{A}$ is the subring of matrices given by
\[
K_n(\mathbb{A})=\{K_n(a,b)\, | \, a,b\in \mathbb{A}\}\subset M_n(\mathbb{A}).
\] 
At first, we will prove that $K_n(\mathbb{A})$ is a subalgebra of $M_n(\mathbb{A})$.

\begin{Lemma}\label{algebra}
If $a,b,c,d,\alpha\in \mathbb{A}$, then
\begin{description}
\item[(i)] $\alpha\cdot K_n(a,b)=K_n(\alpha \cdot a,\alpha \cdot b)$,
\item[(ii)] $K_n(a,b)+K_n(c,d)=K_n(a+c,b+d)$,
\item[(iii)] $K_n(a,b)\cdot K_n(c,d)=K_n(ac,ad+bc+nbd)$,
\item[(iv)] $K_n(a,b)^m=K_n(a^m,p_m(a,b))$, 
\end{description}
where the polynomials $p_{m,n}(x,y)\in \mathbb{A}[x,y]$ satisfy the recurrence relation 
\[
p_{m}^n(x,y)=(x+ny)p_{m-1}^n(x,y)+yx^{m-1}
\]
with initial value $p_{0}^n(x,y)=0$. 
\end{Lemma}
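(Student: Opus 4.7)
The key observation is that $K_n(a,b) = (a+b)I_n + bJ$, where $J := A(\mathcal{K}_n)$ is the $n\times n$ matrix with $0$'s on the diagonal and $1$'s off-diagonal. Once this is in hand, parts (i) and (ii) are immediate from the $\mathbb{A}$-linearity of the map $(a,b)\mapsto (a+b)I_n + bJ$: scaling by $\alpha$ sends $(a,b)$ to $(\alpha a,\alpha b)$, and componentwise addition of $(a,b)+(c,d)$ in the pair corresponds to addition of the matrices. So I would dispense with (i) and (ii) in one sentence.

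The main work is (iii). The plan is to compute $J^2$ directly: the $(i,i)$ entry of $J^2$ is $\sum_{j\ne i} 1 = n-1$, and for $i\ne k$ the $(i,k)$ entry is the number of $j\notin\{i,k\}$, i.e.\ $n-2$. Hence
\[
J^2 = (n-1)I_n + (n-2)J.
\]
Plugging this into the expansion
\[
K_n(a,b)K_n(c,d) = \bigl[(a+b)I_n + bJ\bigr]\bigl[(c+d)I_n + dJ\bigr]
\]
and collecting the coefficients of $I_n$ and $J$ yields, after a short simplification, coefficient $ac+ad+bc+nbd$ in front of $I_n$ and coefficient $ad+bc+nbd$ in front of $J$. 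Since these satisfy $(ac)+(ad+bc+nbd) = ac+ad+bc+nbd$, the product equals $K_n(ac,\, ad+bc+nbd)$, as claimed.

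For (iv) the plan is a straightforward induction on $m$. The base case $m=0$ gives $K_n(a,b)^0 = I_n = K_n(1,0)$, consistent with $p_0^n(x,y)=0$ if we interpret the first coordinate as $a^0=1$; the base case $m=1$ then also works since the recurrence gives $p_1^n(a,b) = b$. For the inductive step, assume $K_n(a,b)^{m-1} = K_n(a^{m-1},p_{m-1}^n(a,b))$ and apply (iii):
\[
K_n(a^{m-1},p_{m-1}^n(a,b))\cdot K_n(a,b) = K_n\!\bigl(a^m,\; a^{m-1}b + (a+nb)p_{m-1}^n(a,b)\bigr),
\]
which matches $K_n(a^m,p_m^n(a,b))$ by the stated recurrence.

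The only real obstacle is the bookkeeping in (iii); there is nothing conceptually delicate, but one must be careful that the $bd(n-1)$ from $J^2$ gets absorbed into the diagonal in a way that still produces a matrix of the form $(a'+b')I_n + b'J$. The computation above confirms that it does, and in fact this is precisely why $K_n(\mathbb{A})$ is closed under multiplication.
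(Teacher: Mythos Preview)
Your proposal is correct and follows essentially the same approach as the paper: parts (i)--(ii) are dismissed as immediate, part (iii) is obtained from the identity $J^2=(n-1)I_n+(n-2)J$ by expanding $\bigl[(a+b)I_n+bJ\bigr]\bigl[(c+d)I_n+dJ\bigr]$, and part (iv) is proved by induction on $m$ using (iii). The paper's argument is identical in structure and detail.
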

\begin{proof}
The parts $(i)$ and $(ii)$ are straightforward.
$(iii)$ Since $A(\mathcal{K}_n)^2=(n-1)I_n+(n-2)A(\mathcal{K}_n)$, 
then $K_n(a,b)\cdot K_n(c,d)=((a+b)I_n+bA(\mathcal{K}_n))\cdot((c+d)I_n+dA(\mathcal{K}_n))=(ac+ad+bc+nbd)I_n+(ad+bc+nbd)A(\mathcal{K}_n)=K_n(ac,ad+bc+nbd)$.

$(iv)$
We will use induction on $m$.
The result is clear for $m=1$ because $p_{1}^n(a,b)=(a+nb)p_{0}^n(a,b)+ba^{0}=b$.
Assume that the result is true for all the natural numbers less or equal to $m-1$.
Thus 
\begin{eqnarray*}
K_n(a,b)^m &=& K_n(a,b)^{m-1}\cdot K_n(a,b)=K_n(a^{m-1},p_{m-1}^n(a,b))\cdot K_n(a,b)\\
&=&K_n(a^{m},(a+nb)p_{m-1}^n(a,b)+ba^{m-1})=K_n(a^m,p_m^n(a,b)).
\end{eqnarray*}
\end{proof}

\begin{Remark}
Note that $K_n(\mathbb{A})$ is a commutative ring with identity because $K_n(1,0)=I_n$ for all $n\in \mathbb{N}$.
On the other hand, 
since $K_n(0,1)K_n(-n,1)=0=K_n(0,0)$, then $K_n(\mathbb{A})$ is not a principal ideal domain  because it has zero divisors.
\end{Remark}

\begin{Remark}
Using induction on $n$ is not difficult to see that
\[
p_{m}^n(x,y)=\sum_{i=1}^m n^{i-1}\binom{m}{i}x^{m-i}y^{i}.
\]
Also, since $p_m^n(n,-1)=(n-n)p_{m-1}^n(n,-1)-(n)^{m-1}=-n^{m-1}$, then 
\[
K_n(n,-1)^m=n^{m-1}K_n(n,-1) \text{ for all }m\in \mathbb{N}.
\]
\end{Remark}

Now, we will apply theorem~\ref{prin} $(iii)$ when the base ring is the subring of matrices with entries in $K_n(\mathbb{A})$
to obtain a theorem that will be a powerful tool to calculate the critical group of several graphs.

Given $A=(a_{i,j}),B=(b_{i,j})\in M_n(\mathbb{A})$ and $m\geq 2$, let
\[
\Phi_m(A,B)
=
\left(
\begin{array}{ccc}
K_m(a_{1,1},b_{1,1})&\cdots&K_m(a_{1,n},b_{1,n})\\
\vdots& \ddots& \vdots\\
K_m(a_{n,1},b_{n,1})&\cdots&K_m(a_{n,n},b_{n,n})
\end{array}
\right)
\in M_n(K_m(\mathbb{A}))
\subseteq M_{nm}(\mathbb{A}).
\]

\medskip

As the next theorem will show, if a matrix has the block structure of $\Phi_m(A,B)$, 
then we can get a simpler equivalent matrix.

\begin{Theorem}\label{CompletaMatrices}
Let $A=(a_{i,j}),B=(b_{i,j})\in M_n(\mathbb{A})$ and $n\geq 2$, then
\[
\Phi_m(A,B)
\sim_e
\left[ 
\bigoplus_{i=1}^{m-2}  A
\right]
\oplus 
\left( 
\begin{array}{cc}
A&B\\
0 &A+mB
\end{array}
\right)
\]
\end{Theorem}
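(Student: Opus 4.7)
The plan is to lift the argument of Claim~\ref{claimK} from the scalar level to the block level. Claim~\ref{claimK} (and, more precisely, its proof) furnishes integer-entry matrices $P_m$ and $Q_m$, each a product of elementary matrices, such that
\[
P_m\, K_m(a,b)\, Q_m \;=\; \begin{pmatrix} a & b \\ -ma & a \end{pmatrix} \oplus a I_{m-2}
\]
for every $a,b \in \mathbb{A}$. The crucial feature is that $P_m$ and $Q_m$ do not depend on the scalars $a,b$, so the same reduction can be carried out simultaneously on each of the $n^{2}$ blocks of $\Phi_m(A,B)$.

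First I would multiply by $I_n \otimes P_m$ on the left and $I_n \otimes Q_m$ on the right. Both factors are block-diagonal, so $(I_n \otimes P_m)\,\Phi_m(A,B)\,(I_n \otimes Q_m)$ has $(i,j)$-block equal to $P_m\, K_m(a_{ij},b_{ij})\, Q_m$, i.e.\ the normal form above evaluated at $(a_{ij},b_{ij})$. Each Kronecker embedding $I_n \otimes E$ of an elementary factor $E$ of $P_m$ or $Q_m$ is a product of $n$ pairwise-commuting elementary matrices in $GL_{nm}(\mathbb{A})$, so both transformations lie in the elementary subgroup.

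Next I would apply the ``block transpose'' permutation that reindexes rows and columns by swapping the two factors of the index set $[n]\times[m]$. After this reordering the matrix becomes an $m \times m$ grid of $n \times n$ blocks, the $(k,\ell)$-block being the $n \times n$ matrix that reads off the $(k,\ell)$-entry of each old $m \times m$ block. From the explicit shape produced in Step~1, every diagonal block is $A$, the $(1,2)$-block is $B$, the $(2,1)$-block is $-mA$, and all remaining blocks vanish, giving
\[
\begin{pmatrix} A & B \\ -mA & A \end{pmatrix} \;\oplus\; \bigoplus_{k=3}^{m} A.
\]
A single block-row operation --- adding $m$ times the first block-row to the second, realized by the $n$ commuting transvections $I + m\,E_{(2,i),(1,i)}$ --- then clears the $-mA$ entry and turns the $(2,2)$-block into $A + mB$, yielding the desired normal form up to a reordering of the direct summands.

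The only real obstacle is the bookkeeping in the permutation step: one must check that the coordinate permutation it employs can be realized by a product of elementary matrices. This is exactly the same issue that already arises with the permutation-type factors inside $P_n$ and $Q_n$ of Claim~\ref{claimK}, and it is handled by the standard identity expressing a transposition of two basis vectors as the product of transvections $(I + E_{ij})(I - E_{ji})(I + E_{ij})$, possibly followed by a scaling by $-1$ which is itself elementary. No new ideas beyond careful index tracking are required.
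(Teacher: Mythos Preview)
Your approach is essentially identical to the paper's: both conjugate $\Phi_m(A,B)$ by the Kronecker lift of the matrices from Claim~\ref{claimK} so that every block $K_m(a_{ij},b_{ij})$ is simultaneously reduced to $D_m(a_{ij},b_{ij})=\bigl(\begin{smallmatrix}a_{ij}&b_{ij}\\-ma_{ij}&a_{ij}\end{smallmatrix}\bigr)\oplus a_{ij}I_{m-2}$, and then reorganize into the stated block-diagonal form. You are simply more explicit about the block-transpose permutation and the block-row operation clearing $-mA$, which the paper compresses into a single ``$\sim_e$'' (and your factor $I_n\otimes P_m$ is the intended one; the paper's $I_m\otimes P_n$ is a subscript slip).
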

\begin{proof}
Let $P_n$ and $Q_n$ be as in claim~\ref{claimK}, then
$I_m \otimes P_n, I_m\otimes Q_n\in M_{nm}(\mathbb{A})$ are elementary matrices and
\begin{eqnarray*}
(I_m \otimes P_n)\cdot \Phi_m(A,B) \cdot (I_m\otimes Q_n)
&=&
\left(
\begin{array}{ccc}
D_m(a_{1,1},b_{1,1})&\cdots&D_m(a_{1,n},b_{1,n})\\
\vdots& \ddots& \vdots\\
D_m(a_{n,1},b_{n,1})&\cdots&D_m(a_{n,n},b_{n,n})
\end{array}
\right)\\
&\sim_e&
\left[ 
\bigoplus_{i=1}^{m-2}  A
\right]
\oplus 
\left( 
\begin{array}{cc}
A&B\\
0 &A+mB
\end{array}
\right),
\end{eqnarray*}
where $D_m(a_{i,j},b_{i,j})=
\left(\begin{array}{cc}
a_{i,j}&b_{i,j}\\
-ma_{i,j}&a_{i,j}
\end{array}\right)
\oplus a_{i,j}I_{m-2}$.
\end{proof}

In the last part of this paper, we will use theorem~\ref{CompletaMatrices} to find the critical group
of some graphs whose Laplacian matrix is given by $\Phi_m(A,B)$ for $A,B\in M_n(\mathbb{A})$.
The simplest case is when $n=2$.
If $\Phi_m(A,B)$ for $A,B\in M_2(\mathbb{A})$ is the Laplacian matrix of a graph, then the vertex set of the graph can be partitioned 
in two sets and the incidence structure between these sets is given by a matrix in $K_m(\mathbb{Z})$.
In this sense, we will define the following families of graphs:

Let $U=\{u_1,u_2,\ldots,u_m\}$, $V=\{v_1,v_2,\ldots,v_m\}$, and $\mathcal{C}_{u,v}$ be the graph with $U\cup V$ as vertex set
and edge set equal to $E_u\cup E_{v}\cup E_{u,v}$, where
\[
E_u
=
\begin{cases}
\emptyset  & \text{ if } u=m,\\
u_iu_j \text{ for all } i\neq j\in \{1,2,\cdots, m\}& \text{ if } u=M,
\end{cases}
\]
similarly for $E_v$, 
and
\[
E_{u,v}
=
\begin{cases}
u_iu'_j \text{ for all } i,j\in \{1,2,\cdots, m\}& \text{ if }\mathcal{C}=\mathcal{K},\\
u_iu'_j \text{ for all } i\neq j\in \{1,2,\cdots, m\}& \text{ if }\mathcal{C}=\mathcal{L},\\
u_iu'_i \text{ for all } i\in \{1,2,\cdots, m\}& \text{ if }\mathcal{C}=\mathcal{M}.
\end{cases}
\]

Note that $\mathcal{K}_{m,m}$ is the bipartite complete graph with $m$ vertices in each partition and 
$\mathcal{L}_{m,m}$ is the bipartite complete graph with $2m$ vertices minus a matching.
The Laplacian matrix of all these graphs can be represented by $\Phi_m(A,B)$ for some  two by two matrices $A$ and $B$.

\newpage

For instance, the graph $\mathcal{M}_{M,m}$ is illustrated in figure~\ref{cone}.
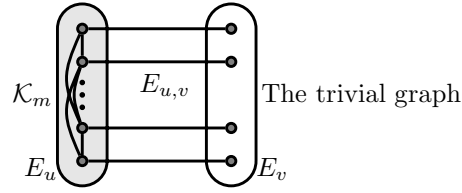
\begin{figure}[h] \centering
		\begin{tikzpicture}[line width=1.1pt, scale=1.1]
			\fill[gray!20!white] (0,-0.8) -- (0,0.8) arc (0:180:0.3) -- (-0.6,-0.8) arc (180:360:0.3);
 			\draw (0,0) {
			+(0,-0.8) -- (0,0.8) arc (0:180:0.3) -- (-0.6,-0.8) arc (180:360:0.3)
			+(-0.3,1.6) node[inner sep=0pt, minimum width=3.5pt,draw, circle, fill=gray] (v1) {}
 			+(-0.3,1.2) node[inner sep=0pt, minimum width=3.5pt,draw, circle, fill=gray] (v2) {}
 			+(-0.3,0.95) node[inner sep=0pt, minimum width=1pt,draw, circle, fill=black] () {}
	 		+(-0.3,0.8) node[inner sep=0pt, minimum width=1pt,draw, circle, fill=black] () {}
 			+(-0.3,0.65) node[inner sep=0pt, minimum width=1pt,draw, circle, fill=black] () {}
 			+(-0.3,0.4) node[inner sep=0pt, minimum width=3.5pt,draw, circle, fill=gray] (v3) {}
 			+(-0.3,0) node[inner sep=0pt, minimum width=3.5pt,draw, circle, fill=gray] (v4) {}
 			+(1.5,1.6) node[inner sep=0pt, minimum width=3.5pt,draw, circle, fill=gray] (v5) {}
 			+(1.5,1.2) node[inner sep=0pt, minimum width=3.5pt,draw, circle, fill=gray] (v6) {}
 			+(1.5,0.4) node[inner sep=0pt, minimum width=3.5pt,draw, circle, fill=gray] (v7) {}
 			+(1.5,0) node[inner sep=0pt, minimum width=3.5pt,draw, circle, fill=gray] (v8) {}
			(v2) .. controls +(252:0.4) and +(108:0.4).. (v3)
			(v1) .. controls +(246:0.6) and +(114:0.6).. (v3)
			(v2) .. controls +(246:0.6) and +(114:0.6).. (v4)
 			(v1) to (v2) (v3) to (v4) (v1) to (v5) (v2) to (v6) (v3) to (v7) (v4) to (v8) 
 			+(-2.4,0.8) node {\small $\mathcal{K}_m$}
			 +(-0.8,0.9) node {\small $E_{u,v}$}
 			+(-2.3,-0.1) node {\small $E_u$}
			+(0.3,0) -- (1.8,0.8) arc (0:180:0.3) -- (1.2,-0.8) arc (180:360:0.3)
 			+(0.2,-0.1) node {\small $E_v$}
 			+(-5,0) node {\small \mbox{}}
			};
			\draw (3.07,0) node {\small \text{The trivial graph}};
		\end{tikzpicture}
	\caption{\small The graph $\mathcal{M}_{M,m}$.}
	\label{cone}
\end{figure}

Before using theorem~\ref{CompletaMatrices} in order to calculate the critical group of the $n$-cones of graphs $\mathcal{C}_{a,b}$, 
we will introduce a theorem that gives an equivalent matrix of $\mathcal{C}_{a,b}$ when the base ring is a general commutative ring with identity.

\begin{Corollary}\label{BipartitaCompleta1}
Let $m\geq 2$, $a,b\in \mathbb{A}$ such that the equation $ax+by=1$ 
has a solution in $\mathbb{A}$, and $C_{u,u}(a,b)=aI_{2m}+bA(\mathcal{C}_{u,u})$.
Then
\begin{description}
\item[(i)]
$
K_{m,m}(a,b)
\sim_u
I_2 \oplus aI_{2(m-2)} \oplus 
a\left( 
\begin{array}{cc}
a&mb\\
mb &a
\end{array}
\right)
$,
\item[(ii)]
$
L_{m,m}(a,b)
\sim_u
I_{m} \oplus (a^2-b^2)I_{m-2} \oplus 
\left( 
\begin{array}{cc}
a^2&(m-2)ab\\
(m-2)ab &a^2-(m-1)b^2
\end{array}
\right)
$,
\item[(iii)]
$
L_{M,M}(a,b)
\sim_u
I_{m-1} \oplus a(a-2b)I_{m-2} \oplus 
\left( 
\begin{array}{ccc}
a(a-2b)&ab & 0\\
0&a+2(m-1)b& 0\\
0& (m-1)b & a
\end{array}
\right)
$,
\item[(iv)]
$
M_{M,M}(a,b)
\sim_u
I_{m+1} \oplus a(a-2b)I_{m-2} \oplus 
\left( 
\begin{array}{cc}
a(a-2b)&-b^2(2a+(m-2)b)\\
0& (a+(m-1)b)^2-b^2
\end{array}
\right)
$.
\end{description}
\end{Corollary}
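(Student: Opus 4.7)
The plan is to recognize each of the four matrices as an instance of the block matrix $\Phi_m(A,B)$ from Theorem~\ref{CompletaMatrices}, apply that theorem, and then explicitly reduce the resulting summands by row/column operations. The key observation is that every $m\times m$ block appearing in any of these matrices lies in the subring $K_m(\mathbb{A})$: the diagonal blocks are either $aI_m=K_m(a,0)$ (cases (i), (ii)) or $aI_m+bA(\mathcal{K}_m)=K_m(a-b,b)$ (cases (iii), (iv)), and the off-diagonal blocks are one of $b(I_m+A(\mathcal{K}_m))=K_m(0,b)$, $bA(\mathcal{K}_m)=K_m(-b,b)$, or $bI_m=K_m(b,0)$. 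Collecting the $2\times 2$ matrices of $K_m$-parameters yields $(A,B)=(aI_2,\,bA(\mathcal{K}_2))$ in (i), $(aI_2-bA(\mathcal{K}_2),\,bA(\mathcal{K}_2))$ in (ii), $((a-b)I_2-bA(\mathcal{K}_2),\,b(I_2+A(\mathcal{K}_2)))$ in (iii), and $((a-b)I_2+bA(\mathcal{K}_2),\,bI_2)$ in (iv).

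Theorem~\ref{CompletaMatrices} then gives
\[
C_{u,v}(a,b)\sim_e \Bigl[\bigoplus_{i=1}^{m-2}A\Bigr]\oplus\begin{pmatrix}A & B\\ 0 & A+mB\end{pmatrix},
\]
after which the argument splits in two. First, the direct sum of $A$'s: in cases (ii)-(iv) the entries of $A$ are $\mathbb{A}$-linear combinations of $a$ and $b$, so the hypothesis $ax+by=1$ produces an $SL_2(\mathbb{A})$ Bezout transformation reducing $A$ to $\mathrm{diag}(1,\det A)$. Hence $\bigoplus_{i=1}^{m-2}A\sim_u I_{m-2}\oplus(\det A)I_{m-2}$, where $\det A=a^2-b^2$ in (ii) and $\det A=a(a-2b)$ in (iii) and (iv). In (i), $A=aI_2$ is already diagonal and contributes $aI_{2(m-2)}$.

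Second, I reduce the $4\times 4$ corner block by pivoting: right-multiply by $\mathrm{diag}(Q,I_2)$ with $Q\in SL_2(\mathbb{A})$ built from the Bezout coefficients to produce a $1$ in position $(1,1)$, use it to clear the surrounding row and column, then repeat the analogous reduction on the lower-right $A+mB$ block whenever its entries admit a Bezout relation. For (i), (ii), (iv) the entries of $A+mB$ still generate the unit ideal in $\mathbb{A}$, so two pivoting steps extract an $I_2$ summand and leave the stated $2\times 2$ residual. In case (iii), however, the entries of $A_3+mB_3$ are $a+(m-1)b$ and $(m-1)b$, which need not generate the unit ideal (the integer $m-1$ may fail to be coprime with $b$ in the ring), so only one pivoting step is available and a $3\times 3$ residual remains. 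Combining with the reduced $A$-copies and reindexing yields the claimed forms.

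The main technical obstacle is the explicit bookkeeping in the $4\times 4$ corner reduction: after the first pivot, the $B$-entries propagate nontrivially into the lower-right block, and one must carefully verify that the residual matches the specific $2\times 2$ (respectively $3\times 3$) matrix stated. All pivoting matrices are unimodular over $\mathbb{A}$ precisely because of the solvability hypothesis $ax+by=1$.
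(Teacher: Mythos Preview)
Your proposal is correct and follows essentially the same route as the paper: identify each matrix as $\Phi_m(A,B)$ with the exact $(A,B)$ pairs you list, invoke Theorem~\ref{CompletaMatrices}, reduce each copy of $A$ via the B\'ezout relation $ax+by=1$, and then pivot explicitly on the $4\times 4$ corner. Your conceptual explanation for why case~(iii) leaves a $3\times 3$ residual (the entries $a+(m-1)b$ and $(m-1)b$ of $A+mB$ need not generate the unit ideal) is a nice point that the paper does not articulate, though it is implicit in the fact that the paper simply stops after one pivot there.
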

\begin{proof}
$(i)$ Since $K_{m,m}(a,b)=\Phi_m(A,B)$ for 
$
A
=
\left( 
\begin{array}{cc}
a&0\\
0 & a
\end{array}
\right)
$
and
$
B
=
\left( 
\begin{array}{cc}
0&b\\
b &0
\end{array}
\right)
$,
then by theorem~\ref{CompletaMatrices}
\[
K_{m,m}(a,b)
\sim_e
\left[ 
\bigoplus_{i=1}^{m-2}  
\left( 
\begin{array}{cc}
a&0\\
0 &a
\end{array}
\right)
\right]
\oplus 
\left( 
\begin{array}{cccc}
a& 0 & 0&b\\
0 &a &b&0\\
0& 0& a&mb\\
0& 0& mb&a
\end{array}
\right).
\]
Moreover, since
\[
\left( 
\begin{array}{cccc}
a& 0 & 0&b\\
0 &a &b&0\\
0& 0& a&mb\\
0& 0& mb&a
\end{array}
\right)
\left( 
\begin{array}{cccc}
x& 0 & 0&-b\\
0 &1 &0&0\\
0& 0& 1&0\\
y& 0& 0&a
\end{array}
\right)
=
\left( 
\begin{array}{cccc}
1& 0 & 0&0\\
0 &a &b&0\\
mby& 0& a&mab\\
ay& 0& mb&a^2
\end{array}
\right)
\sim_e
\left( 
\begin{array}{cccc}
1& 0 & 0&0\\
0 &a &b&0\\
0& 0& a&mab\\
0& 0& mb&a^2
\end{array}
\right)
\]
and
\[
\left( 
\begin{array}{ccc}
a &b&0\\
0& a&mab\\
0& mb&a^2
\end{array}
\right)
\left( 
\begin{array}{ccc}
x &-b&0\\
y& a&0\\
0& 0&1
\end{array}
\right)
=
\left( 
\begin{array}{ccc}
1 &0&0\\
ay& a^2&mab\\
mby& mab&a^2
\end{array}
\right)
\sim_e
\left( 
\begin{array}{ccc}
1 &0&0\\
0& a^2&mab\\
0& mab&a^2
\end{array}
\right),
\]
then
\[
\left( 
\begin{array}{cccc}
a& 0 & 0&b\\
0 &a &b&0\\
0& 0& a&mb\\
0& 0& mb&a
\end{array}
\right)
\sim_u
I_2
\oplus
a\left( 
\begin{array}{cc}
a&mb\\
mb &a
\end{array}
\right).
\]
Hence
\[
K_{m,m}(a,b)
\sim_u
I_2 \oplus aI_{2(m-2)} \oplus 
a\left( 
\begin{array}{cc}
a&mb\\
mb &a
\end{array}
\right).
\]

$(ii)$ Since
$L_{m,m}(a,b)=\Phi_m(A,B)$ 
for 
$
A
=
\left( 
\begin{array}{cc}
a&-b\\
-b & a
\end{array}
\right)
$
and
$
B
=
\left( 
\begin{array}{cc}
0&b\\
b &0
\end{array}
\right)
$,
then by theorem~\ref{CompletaMatrices}
\[
L_{m,m}(a,b)
\sim_e
\left[ 
\bigoplus_{i=1}^{m-2}  
\left( 
\begin{array}{cc}
a&-b\\
-b &a
\end{array}
\right)
\right]
\oplus 
\left( 
\begin{array}{cccc}
a& -b & 0&b\\
-b &a &b&0\\
0& 0& a&(m-1)b\\
0& 0& (m-1)b&a
\end{array}
\right).
\]
Moreover, since
\[
\left( 
\begin{array}{cc}
a& -b\\
-b& a
\end{array}
\right)
\left( 
\begin{array}{cc}
x& b\\
-y& a
\end{array}
\right)
=\left( 
\begin{array}{cc}
1& 0\\
-by-ay& a^2-b^2
\end{array}
\right)
\sim_e
\left( 
\begin{array}{cc}
1& 0\\
0& a^2-b^2
\end{array}
\right),
\]
{\small
\[
\left( 
\begin{array}{cccc}
a& -b & 0&b\\
-b &a &b&0\\
0& 0& a&(m-1)b\\
0& 0& (m-1)b&a
\end{array}
\right)
\sim_e
\left( 
\begin{array}{cccc}
a& 0 & 0&b\\
-b &a &b&0\\
0& (m-1)b& a&(m-1)b\\
0& a& (m-1)b&a
\end{array}
\right),
\]
}
{\footnotesize
\begin{eqnarray*}
\left( 
\begin{array}{cccc}
1& 0 & 0&0\\
* &a &b&b^2\\
*& (m-1)b& a&(m-1)ab\\
*& a& (m-1)b&a^2
\end{array}
\right)
&=&
\left( 
\begin{array}{cccc}
a& 0 & 0&b\\
-b &a &b&0\\
0& (m-1)b& a&(m-1)b\\
0& a& (m-1)b&a
\end{array}
\right)
\left( 
\begin{array}{cccc}
x& 0 & 0&-b\\
0 &1 &0&0\\
0& 0& 1&0\\
y& 0& 0&a
\end{array}
\right)\\
&\sim_e&
I_1\oplus
\left( 
\begin{array}{ccc}
 a & b&0\\
(m-1)b &a&(m-2)ab\\
a& (m-1)b&a^2-(m-1)b^2
\end{array}
\right),
\end{eqnarray*}
}
and
{\small
\begin{eqnarray*}
\left( 
\begin{array}{ccc}
 1 & 0&0\\
* &a^2&(m-2)ab\\
*& (m-2)ab&a^2-(m-1)b^2
\end{array}
\right)
&=&
\left( 
\begin{array}{ccc}
 a & b&0\\
(m-1)b &a&(m-2)ab\\
a& (m-1)b&a^2-(m-1)b^2
\end{array}
\right)
\left( 
\begin{array}{ccc}
 x & -b&0\\
y &a&0\\
0& 0&1
\end{array}
\right)\\
&\sim_e&
I_1\oplus
\left( 
\begin{array}{cc}
a^2&(m-2)ab\\
(m-2)ab&a^2-(m-1)b^2
\end{array}
\right),
\end{eqnarray*}
}
then
\[
L_{m,m}(a,b)
\sim_u
I_{m} \oplus (a^2-b^2)I_{m-2} \oplus 
\left( 
\begin{array}{cc}
a^2&(m-2)ab\\
(m-2)ab &a^2-(m-1)b^2
\end{array}
\right).
\]
$(iii)$ Since
$L_{M,M}(a,b)=\Phi_m(A,B)$ 
for 
$
A
=
\left( 
\begin{array}{cc}
a-b&-b\\
-b & a-b
\end{array}
\right)
$
and
$
B
=
\left( 
\begin{array}{cc}
b&b\\
b &b
\end{array}
\right)
$,
then by theorem~\ref{CompletaMatrices}
\[
L_{M,M}(a,b)
\sim_e
\left[ 
\bigoplus_{i=1}^{m-2}  
\left( 
\begin{array}{cc}
a-b&-b\\
-b &a-b
\end{array}
\right)
\right]
\oplus 
\left( 
\begin{array}{cccc}
a-b& -b & b&b\\
-b &a-b &b&b\\
0& 0& a+(m-1)b&(m-1)b\\
0& 0& (m-1)b&a+(m-1)b
\end{array}
\right).
\]
Moreover, since
\[
\left( 
\begin{array}{cc}
x& -(x+y)\\
b& a-b
\end{array}
\right)
\left( 
\begin{array}{cc}
a-b& -b\\
-b& a-b
\end{array}
\right)
=
\left( 
\begin{array}{cc}
1& *\\
0& a(a-2b)
\end{array}
\right)
\sim_e
\left( 
\begin{array}{cc}
1& 0\\
0& a(a-2b)
\end{array}
\right),
\]
{\tiny
\begin{eqnarray*}
\left( 
\begin{array}{ccc}
x& -(x+y) & 0\\
b &a-b &0\\
0& 0& I_2
\end{array}
\right)
\left( 
\begin{array}{cccc}
a-b& -b & b&b\\
-b &a-b &b&b\\
0& 0& a+(m-1)b&(m-1)b\\
0& 0& (m-1)b&a+(m-1)b
\end{array}
\right)
=
\left( 
\begin{array}{cccc}
1&* & *&*\\
0&a(a-2b)&ab & ab\\
0&0&a+(m-1)b& (m-1)b\\
0&0& (m-1)b & a+(m-1)b
\end{array}
\right),
\end{eqnarray*}
}
and
\[
\left( 
\begin{array}{ccc}
a(a-2b)&ab & ab\\
0&a+(m-1)b& (m-1)b\\
0& (m-1)b & a+(m-1)b
\end{array}
\right)
\sim_e
\left( 
\begin{array}{ccc}
a(a-2b)&ab & 0\\
0&a+2(m-1)b& 0\\
0& (m-1)b & a
\end{array}
\right)
\]
then the result is followed.


$(iv)$ Since
$M_{M,M}(a,b)=\Phi_m(A,B)$ 
for 
$
A
=
\left( 
\begin{array}{cc}
a-b&b\\
b & a-b
\end{array}
\right)
$
and
$
B
=
\left( 
\begin{array}{cc}
b&0\\
0 &b
\end{array}
\right)
$,
then by theorem~\ref{CompletaMatrices}
\[
M_{M,M}(a,b)
\sim_e
\left[ 
\bigoplus_{i=1}^{m-2}  
\left( 
\begin{array}{cc}
a-b&b\\
b &a-b
\end{array}
\right)
\right]
\oplus 
\left( 
\begin{array}{cccc}
a-b& b & b&0\\
b &a-b &0&b\\
0& 0& a+(m-1)b&b\\
0& 0& b&a+(m-1)b
\end{array}
\right).
\]
Moreover, since 
{\tiny
\[
\left( 
\begin{array}{ccc}
x& x+y & 0\\
-b &a-b &0\\
0& 0& I_2
\end{array}
\right)
\left( 
\begin{array}{cccc}
a-b& b & b&0\\
b &a-b &0&b\\
0& 0& a+(m-1)b&b\\
0& 0& b&a+(m-1)b
\end{array}
\right)
=
\left( 
\begin{array}{cccc}
1& * & *&*\\
0 &a(a-2b) &-b^2&(a-b)b\\
0& 0& a+(m-1)b&b\\
0& 0& b&a+(m-1)b
\end{array}
\right)
\]
}
and
{\tiny
\[
\left( 
\begin{array}{ccc}
a(a-2b)&-b^2 & (a-b)b\\
0&a+(m-1)b& b\\
0& b & a+(m-1)b
\end{array}
\right)
\left( 
\begin{array}{ccc}
1&0 & 0\\
0&a+(m-1)b& y-(m-1)x\\
0& -b & x
\end{array}
\right)
=
\left( 
\begin{array}{ccc}
a(a-2b)&-b^2(2a+(m-2)b) & *\\
0& (a+(m-1)b)^2-b^2& *\\
0& 0 & 1
\end{array}
\right),
\]
}
then the result is followed.
\end{proof}

\begin{Remark}
Note that $\mathcal{K}_{M,M}$ is the complete graph with $2m$ vertices and $\mathcal{M}_{m,m}$ is the disjoint union of $m$ copies of $\mathcal{K}_2$.
\end{Remark}

When a graph $\mathcal{G}$ is not regular, then there is not  a straightforward way to define their matrix $G(a,b)$.
Thus, we will define $K_{m,M}(a,b)$ as $a(I_{m}\oplus 2I_m)+bA(\mathcal{K}_{m,M})$, 
$L_{m,M}(a,b)$ as $a(I_{m}\oplus 2I_m)+bA(\mathcal{L}_{m,M})$,
and $M_{m,M}(a,b)$ as $a(I_{m}\oplus (m+1)I_m)+bA(\mathcal{M}_{m,M})$.
Now, we have the following equivalent matrices of $K_{m,M}(a,b)$, $L_{m,M}(a,b)$, and $M_{m,M}(a,b)$.

\begin{Corollary}\label{BipartitaCompleta}
Let $m\geq 2$, $a,b\in \mathbb{A}$ such that the equation $ax+by=1$ has a solution in $\mathbb{A}$,
then
\begin{description}
\item[(i)]
$
K_{m,M}(a,b)
\sim_u
I_{2} \oplus aI_{m-2} \oplus 2aI_{m-2} \oplus 
a\left( 
\begin{array}{cc}
2a&-(a-mb)b\\
2m&2a
\end{array}
\right)
$,
\item[(ii)]
$
M_{m,M}(a,b)
\sim_u
I_{m} \oplus ((m+1)a^2-b^2)I_{m-2} \oplus 
\left( 
\begin{array}{cc}
(m+1)a^2-b^2&b\\
0&(a+b)(b-(m+1)a)
\end{array}
\right)
$,
\item[(iii)]
$
L_{m,M}(a,b)
\sim_u
I_{m-1} \oplus (2a^2-b^2)I_{m-2}\oplus 
\left( 
\begin{array}{ccc}
2a^2-b^2&0& ab+mb^2\\
0&a&(m-1)b\\
0&(m-1)b& 2a+mb
\end{array}
\right)
$.
\end{description}
\end{Corollary}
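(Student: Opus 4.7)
The plan is to follow the template of Corollary~\ref{BipartitaCompleta1}. For each of the three matrices in the statement, the first step is to identify $2\times 2$ matrices $A,B\in M_2(\mathbb{A})$ such that the matrix has the block form $\Phi_m(A,B)$. Using the simplification $K_m(c,d)=cI_m+dJ_m$ (where $J_m$ denotes the all-ones matrix) and reading off the diagonal entries and the edge incidences of the graphs $\mathcal{K}_{m,M}$, $\mathcal{L}_{m,M}$, and $\mathcal{M}_{m,M}$, one obtains
\begin{align*}
K_{m,M}(a,b) &= \Phi_m\!\left(\begin{pmatrix} a & 0 \\ 0 & 2a-b\end{pmatrix},\ \begin{pmatrix} 0 & b \\ b & b\end{pmatrix}\right),\\
L_{m,M}(a,b) &= \Phi_m\!\left(\begin{pmatrix} a & -b \\ -b & 2a-b\end{pmatrix},\ \begin{pmatrix} 0 & b \\ b & b\end{pmatrix}\right),\\
M_{m,M}(a,b) &= \Phi_m\!\left(\begin{pmatrix} a & b \\ b & (m+1)a-b\end{pmatrix},\ \begin{pmatrix} 0 & 0 \\ 0 & b\end{pmatrix}\right).
\end{align*}

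Next I apply Theorem~\ref{CompletaMatrices} to each presentation to get an elementary equivalence to
\[
\bigoplus_{i=1}^{m-2} A \ \oplus\ \begin{pmatrix} A & B \\ 0 & A+mB\end{pmatrix}.
\]
Each copy of $A$ in the direct sum is then reduced by multiplication on the left by a unit matrix of the form $\begin{pmatrix} x & y \\ -b & a\end{pmatrix}$ (or an adapted variant), provided by the hypothesis $ax+by=1$, exactly as in the proofs of Theorem~\ref{prin}(iii) and Corollary~\ref{BipartitaCompleta1}. This produces a leading $1$, which is then used to clear the rest of its row and column by elementary operations. After permuting and collecting the $m-2$ copies, this accounts for the $I_{m-2}$-type and scalar-identity-type summands listed in the conclusion.

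The bulk of the proof lies in reducing the $4\times 4$ block $\begin{pmatrix} A & B \\ 0 & A+mB\end{pmatrix}$. The strategy is iterative: use the top-left block $A$ together with the hypothesis to pin down a leading $1$, clear its row and column, then repeat on the resulting $3\times 3$ block to obtain a second $1$; what is left is the residual block printed in the statement---of size $2\times 2$ in parts~(i) and (ii), and $3\times 3$ in part~(iii), where the last step does not yield a further leading $1$.

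I expect this last reduction to be the main obstacle. Although each move is elementary, the hypothesis $ax+by=1$ only ensures that $(a,b)=(1)$, and not, in general, that $(2a-b,b)=(1)$ or $((m+1)a-b,b)=(1)$, so the $\begin{pmatrix} x & -b \\ y & a\end{pmatrix}$-trick cannot be repeated blindly on the lower-right blocks. Matching exactly the asserted forms therefore requires a careful, case-by-case choice of row and column operations, analogous to the explicit matrix products carried out in the proof of Corollary~\ref{BipartitaCompleta1}.
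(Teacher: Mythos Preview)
Your overall strategy is exactly the one the paper follows: write the matrix as $\Phi_m(A,B)$ for suitable $2\times 2$ matrices $A,B$, apply Theorem~\ref{CompletaMatrices}, reduce each of the $m-2$ copies of $A$ using the hypothesis $ax+by=1$, and then reduce the residual $4\times 4$ block $\left(\begin{smallmatrix} A & B\\ 0 & A+mB\end{smallmatrix}\right)$ by producing successive leading $1$'s. The paper does precisely this, with explicit matrix products at each stage.

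There is, however, a concrete discrepancy in your identification of $A$. The paper takes
\[
A=\begin{pmatrix} a & 0\\ 0 & 2a\end{pmatrix},\qquad
A=\begin{pmatrix} a & b\\ b & (m+1)a\end{pmatrix},\qquad
A=\begin{pmatrix} a & -b\\ -b & 2a\end{pmatrix}
\]
for parts (i), (ii), (iii) respectively, whereas you have $2a-b$, $(m{+}1)a-b$, $2a-b$ in the $(2,2)$ slots. This is not cosmetic: the displayed conclusions of the corollary are tied to the paper's choice. With the paper's $A$ one gets the summands $2aI_{m-2}$, $\bigl((m+1)a^2-b^2\bigr)I_{m-2}$, $(2a^2-b^2)I_{m-2}$ appearing in the statement (the latter two as $\det A$); with your $A$ the corresponding summands become $(2a-b)I_{m-2}$, $\bigl((m+1)a^2-ab-b^2\bigr)I_{m-2}$, $(2a^2-ab-b^2)I_{m-2}$, and the residual $2\times 2$ and $3\times 3$ blocks change accordingly. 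So your plan, carried through, will not land on the stated forms.

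Your reading of the definition $K_m(c,d)=cI_m+dJ_m$ is correct, and applied to the text's definition of $K_{m,M}(a,b)$, $L_{m,M}(a,b)$, $M_{m,M}(a,b)$ it does yield your $A$'s; this points to an inconsistency between the paper's stated definitions and the block decomposition it actually uses. For the purpose of proving the corollary \emph{as stated}, you should adopt the paper's $A$'s. (As a side remark, your worry about $(2a-b,b)$ not generating the unit ideal then evaporates in part~(i), since the paper's $A=\mathrm{diag}(a,2a)$ is already diagonal and needs no further reduction.)
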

\begin{proof}
$(i)$
Since $K_{m,M}(a,b)=\Phi_m(A,B)$ for 
$
A
=
\left( 
\begin{array}{cc}
a&0\\
0 & 2a
\end{array}
\right)
$
and
$
B
=
\left( 
\begin{array}{cc}
0&b\\
b &b
\end{array}
\right)
$,
then by theorem~\ref{CompletaMatrices}
\[
K_{m,M}(a,b)\sim_e 
\left[\bigoplus^{m-2} \left(\begin{array}{cccc}a&0\\0&2a\end{array}\right)\right]
\oplus 
\left(
\begin{array}{cccc}
a&0&0&b\\
0&2a&b&b\\
0&0&a&mb\\
0&0&mb&2a+mb
\end{array}
\right).
\]
Moreover,
\begin{eqnarray*}
{\footnotesize
\left(
\begin{array}{cccc}
a&0&0&b\\
0&2a&b&b\\
0&0&a&mb\\
0&0&mb&2a+mb
\end{array}
\right)
\left(
\begin{array}{cccc}
x&0&0&-b\\
0&1&0&0\\
0&0&1&0\\
y&0&0&a
\end{array}
\right)
=
\left(
\begin{array}{cccc}
1&0&0&0\\ 
*&2a&b&ab\\ 
*&0&a&mab\\ 
*&0&mb&2a^2+mab
\end{array}
\right)
\sim_e
\left(
\begin{array}{cccc}
1&0&0&0\\
0&b&2a&ab\\
0&a&0&mab\\
0&0&-2ma&2a^2
\end{array}
\right)
}
\end{eqnarray*}
and
\[
{\small
\left(
\begin{array}{ccc}
y&x&0\\
-a&b&0\\
0&0&1
\end{array}
\right)
\left(
\begin{array}{ccc}
b&2a&ab\\
a&0&mab\\
0&-2ma&2a^2
\end{array}
\right)
=
\left(
\begin{array}{ccc}
1&*&*\\
0&-2a^2&mab^2-a^2b\\
0&-2ma&2a^2
\end{array}
\right)
\sim_e
I_1
\oplus
\left(
\begin{array}{cc}
2a^2&ab(mb-a)\\
2ma&2a^2
\end{array}
\right).
}
\]
$(ii)$
Since $M_{m,M}(a,b)=\Phi_m(A,B)$ for 
$
A
=
\left( 
\begin{array}{cc}
a&b\\
b & (m+1)a
\end{array}
\right)
$
and
$
B
=
\left( 
\begin{array}{cc}
0&0\\
0 &b
\end{array}
\right)
$,
then by theorem~\ref{CompletaMatrices}
\[
M_{m,M}(a,b)
\sim 
\left[\bigoplus^{m-2} \left(\begin{array}{cc}a&b\\b&(m+1)a\end{array}\right)\right]
\oplus 
\left(
\begin{array}{cccc}
a&b&0&0\\
b&(m+1)a&0&b\\
0&0&a&b\\
0&0&b&(m+1)a+mb
\end{array}
\right)
\]
Moreover, 
\[
\left(
\begin{array}{cc}
a&b\\
b&(m+1)a
\end{array}
\right)
\left(
\begin{array}{cc}
x&-b\\
y&a
\end{array}
\right)
=
\left(
\begin{array}{cc}
1&0\\
*&(m+1)a^2-b^2
\end{array}
\right)
\sim_e 
\left(
\begin{array}{cc}
1&0\\
0&(m+1)a^2-b^2
\end{array}
\right)
\]
and
\begin{eqnarray*}
\left(
\begin{array}{cccc}
a&b&0&0\\
b&(m+1)a&0&b\\
0&0&a&b\\
0&0&b&(m+1)a+mb
\end{array}
\right)
&\sim_u&
\left(
\begin{array}{cccc}
1&0&0&0\\ 
*&(m+1)a^2-b^2&0&b\\ 
*&0&a&b\\ 
*&0&b&(m+1)a+mb
\end{array}
\right)\\
&\sim_e&
I_1 
\oplus 
\left(
\begin{array}{ccc}
b&(m+1)a+mb&0\\
a&b&0\\
0&b&(m+1)a^2-b^2
\end{array}
\right)\\
&\sim_u&
I_2 \oplus 
\left(
\begin{array}{cc}
(a+b)(b-(m+1)a)&0\\
b&(m+1)a^2-b^2
\end{array}
\right)
\end{eqnarray*}

$(iii)$
Since $L_{m,M}(a,b)=\Phi_m(A,B)$ for 
$
A
=
\left( 
\begin{array}{cc}
a&-b\\
-b & 2a
\end{array}
\right)
$
and
$
B
=
\left( 
\begin{array}{cc}
0&b\\
b &b
\end{array}
\right)
$,
then by theorem~\ref{CompletaMatrices}
\[
L_{m,M}(a,b)
\sim 
\left[\bigoplus^{m-2} \left(\begin{array}{cc}a&-b\\-b&2a\end{array}\right)\right]
\oplus 
\left(\begin{array}{cccc}a&-b&0&b\\-b&2a&b&b\\0&0&a&(m-1)b \\0&0&(m-1)b&2a+mb \end{array}\right).
\]
Moreover,
\[
\left(
\begin{array}{cc}
x&-y\\
b&a
\end{array}
\right)
\left(\begin{array}{cc}a&-b\\-b&2a\end{array}\right)
=
\left(\begin{array}{cc}1&*\\ 0&2a^2-b^2\end{array}\right)
\sim_e 
\left(\begin{array}{cc}1&0\\0&2a^2-b^2\end{array}\right),
\]
\[
\left(\begin{array}{cccc}a&-b&0&b\\-b&2a&b&b\\0&0&a&(m-1)b \\0&0&(m-1)b&2a+mb \end{array}\right)
\sim_u
\left(\begin{array}{cccc}1&0&0&0\\0&2a^2-b^2&ab&ab+b^2\\0&0&a&(m-1)b \\0&0&(m-1)b&2a+mb \end{array}\right),
\]
and 
\begin{eqnarray*}
\left(\begin{array}{ccc}2a^2-b^2&ab&ab+b^2\\0&a&(m-1)b\\0&(m-1)b&2a+mb \end{array}\right)
&\sim_e&
\left( 
\begin{array}{ccc}
2a^2-b^2&0& ab+mb^2\\
0&a&(m-1)b\\
0&(m-1)b& 2a+mb
\end{array}
\right).
\end{eqnarray*}
\end{proof}

The next corollaries calculate the critical group of the $n$-cone of the graphs $\mathcal{C}_{a,b}$.

\begin{Corollary}\label{BipartitaCompletaG1}
Let $m\geq 2$, $l\geq 1$, and $n\geq 0$, then
\[
K(c_n(\mathcal{K}_{m,m}(l)))=\mathbb{Z}_r^{2} \oplus \mathbb{Z}_{n+ml}^{2(m-2)} \oplus \mathbb{Z}_{(n+ml)s/r} \oplus \mathbb{Z}_{n(n+ml)(n+2ml)/rs},
\]
where $r={\rm gcd}(l,n)$ and $s={\rm gcd}(ml,n)$.
\end{Corollary}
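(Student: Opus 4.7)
The plan is to realize the reduced Laplacian of $c_n(\mathcal{K}_{m,m}(l))$ as a scalar multiple of a matrix to which Corollary~\ref{BipartitaCompleta1}(i) directly applies, and then pin down the Smith normal form of the residual $2\times 2$ block. First, since each non-apex vertex of $c_n(\mathcal{K}_{m,m}(l))$ has degree $n+ml$ and each crossing pair of bipartition vertices is joined by $l$ parallel edges,
\[
L(c_n(\mathcal{K}_{m,m}(l)),s)=K_{m,m}(n+ml,-l).
\]
Set $a=n+ml$ and $b=-l$, so that $\gcd(a,b)=\gcd(n+ml,l)=\gcd(n,l)=r$. Write $a=ra'$ and $b=rb'$ with $\gcd(a',b')=1$. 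Then $K_{m,m}(a,b)=rK_{m,m}(a',b')$, and Corollary~\ref{BipartitaCompleta1}(i) applies to $K_{m,m}(a',b')$ (the equation $a'x+b'y=1$ has a solution over $\mathbb{Z}$). Scaling the resulting unitary equivalence by $r$ yields
\[
K_{m,m}(a,b)\sim_u rI_2\oplus aI_{2(m-2)}\oplus a\begin{pmatrix}a'&mb'\\ mb'&a'\end{pmatrix}.
\]

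Next I would compute the Smith normal form of the residual $2\times 2$ block $aM$, where $M=\begin{pmatrix}a'&mb'\\ mb'&a'\end{pmatrix}$. Since $a'+mb'=n/r$ and $a'-mb'=(n+2ml)/r$, we have $\det(aM)=a^2(a'-mb')(a'+mb')=a^2n(n+2ml)/r^2$. The $\gcd$ of the entries of $aM$ is $a\cdot\gcd(a',mb')=a\cdot\gcd(a',m)$, using $\gcd(a',b')=1$. A short manipulation shows
\[
\gcd(a',m)=\gcd(a',mb')=\gcd(a'+mb',\,mb')=\gcd(n/r,\,ml/r)=s/r,
\]
where $s=\gcd(ml,n)$ (noting $r\mid s$ because $r\mid n$ and $r\mid ml$). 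Thus the SNF of $aM$ is $\mathrm{diag}(as/r,\,an(n+2ml)/(rs))$.

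Assembling the pieces gives
\[
K_{m,m}(a,b)\sim_u\mathrm{diag}\bigl(r,r,\underbrace{a,\ldots,a}_{2(m-2)},\,as/r,\,an(n+2ml)/(rs)\bigr),
\]
and the divisibility chain needed for this to be an SNF reduces to $r\mid a$ (immediate from $r\mid n,l$), $a\mid as/r$ (from $r\mid s$), and the divisibility inside the $2\times 2$ block (automatic). Substituting $a=n+ml$ into the diagonal matrix produces exactly the claimed decomposition
\[
K(c_n(\mathcal{K}_{m,m}(l)))=\mathbb{Z}_r^{2}\oplus\mathbb{Z}_{n+ml}^{2(m-2)}\oplus\mathbb{Z}_{(n+ml)s/r}\oplus\mathbb{Z}_{n(n+ml)(n+2ml)/rs}.
\]
The one step requiring actual thought, rather than bookkeeping, is the identification $\gcd(a',m)=s/r$; everything else is routine scaling of a known equivalence and direct invariant-factor computation.
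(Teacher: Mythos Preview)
Your proof is correct and follows essentially the same route as the paper: identify the reduced Laplacian as $K_{m,m}(n+ml,-l)$, factor out $r=\gcd(n,l)$, apply Corollary~\ref{BipartitaCompleta1}(i), and reduce the residual $2\times 2$ block. The only difference is that the paper simply asserts the equivalence $\begin{pmatrix}n+ml&-ml\\-ml&n+ml\end{pmatrix}\sim_u\begin{pmatrix}s&0\\0&n(n+2ml)/s\end{pmatrix}$, whereas you actually compute it via $\gcd(a',mb')=\gcd(a',m)=s/r$; your extra justification is welcome but not a departure in method.
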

\begin{proof}
Since $L(c_n(\mathcal{K}_{m,m}(l)),s)=K_{m,m}(n+ml,-l)$ and
$K_{m,m}(n+ml,-l)=rK_{m,m}((n+ml)/r,-l/r)$ with ${\rm gcd}((n+ml)/r,-l/r)=1$, 
then applying corollary~\ref{BipartitaCompleta} $(i)$ to $K_{m,m}((n+ml)/r,-l/r)$ we get that
\[
K_{m,m}(n+ml,-l)
\sim_u
rI_2 \oplus (n+ml)I_{2(m-2)} \oplus 
((n+ml)/r)\left( 
\begin{array}{cc}
n+ml&-ml\\
-ml &n+ml
\end{array}
\right).
\]
On the other hand, 
\[
\left( 
\begin{array}{cc}
n+ml&-ml\\
-ml &n+ml
\end{array}
\right)
\sim_u
\left( 
\begin{array}{cc}
s&0\\
0 &n(n+2ml)/s
\end{array}
\right),
\]
where $s={\rm gcd}(ml,n)$ and we get the result.
\end{proof}

\begin{Remark}
Note that $\mathcal{K}_{m,m}$ is the complete bipartite graph with $m$ vertices in each partition. 
Lorenzini in ~\cite{lorenzini91} calculated that
\[
K(\mathcal{K}_{m,m})=\mathbb{Z}_{m}^{2(m-2)}\oplus \mathbb{Z}_{m^2},
\] 
which agrees with the corollary~\ref{BipartitaCompletaG1} for $l=1$ and $n=0$.
Also note that $K(c_n(\mathcal{K}_{m,m}))$ has $2m-2$ invariant factors different to $1$.
\end{Remark}

\begin{Corollary}\label{BipartitaCompletaG2}
Let $m\geq 3$, $l\geq 1$, and $n\geq 0$, then
\[
K(c_n(\mathcal{L}_{m,m}(l)))=\mathbb{Z}_r^{m} \oplus \mathbb{Z}_{(s^2-l^2)/r}^{m-2} \oplus \mathbb{Z}_{rt} \oplus \mathbb{Z}_{u/r^3t},
\]
where $r={\rm gcd}(l,n)$, $s=n+(m-1)l$, $t={\rm gcd}(m-1,n)/{\rm gcd}(l,m-1,n)$, and $u=s^2(n^2+2n(m-1)l+(m-2)l^2)$.
\end{Corollary}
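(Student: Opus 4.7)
The plan is to mirror the proof of Corollary~\ref{BipartitaCompletaG1}: identify the reduced Laplacian as $L_{m,m}(a,b)$ for suitable integers $a,b$, factor out the common divisor $r$, apply Corollary~\ref{BipartitaCompleta1}(ii) in the coprime case, and reduce the remaining $2\times 2$ block to its Smith normal form.

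First, in $c_n(\mathcal{L}_{m,m}(l))$ each non-sink vertex has degree $n+(m-1)l=s$, so
\[
L(c_n(\mathcal{L}_{m,m}(l)),s) \;=\; sI_{2m} + (-l)A(\mathcal{L}_{m,m}) \;=\; L_{m,m}(s,-l).
\]
Since $\gcd(s,-l)=\gcd(n+(m-1)l,l)=\gcd(n,l)=r$, we may write $L_{m,m}(s,-l)=r\,L_{m,m}(a',b')$ with $a'=s/r$ and $b'=-l/r$ coprime integers, so the hypothesis of Corollary~\ref{BipartitaCompleta1}(ii) is satisfied. Applying that corollary to $L_{m,m}(a',b')$ and multiplying the resulting equivalence by $r$ gives
\[
L_{m,m}(s,-l)\;\sim_u\; rI_m \;\oplus\; \tfrac{s^2-l^2}{r}\,I_{m-2} \;\oplus\; N,
\]
where $N$ is the $2\times 2$ block of Corollary~\ref{BipartitaCompleta1}(ii) (with parameters $a',b'$) scaled by $r$. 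This immediately accounts for the first $m$ summands $\mathbb{Z}_r$ and the next $m-2$ summands $\mathbb{Z}_{(s^2-l^2)/r}$ of the claimed critical group.

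The remaining task is to compute the Smith normal form of $N$. A direct expansion of the $2\times 2$ block yields
\[
\det M'' \;=\; a'^2\bigl(a'^2-(m^2-3m+3)\,b'^2\bigr);
\]
substituting $a'=s/r$, $b'=-l/r$ and using the identity $s^2-(m^2-3m+3)l^2 = n^2+2n(m-1)l+(m-2)l^2$ (which follows by expanding $s^2=(n+(m-1)l)^2$ and simplifying $(m-1)^2-(m^2-3m+3)=m-2$) produces $\det N = r^2\det M'' = u/r^2$. For the first invariant factor of $N$, one computes $\gcd(N)=r\cdot\gcd(M'')$, and a prime-by-prime analysis using $\gcd(a',b')=1$ reduces $\gcd(M'')$ to $\gcd(a',m-1)$, which must then be identified with $t=\gcd(m-1,n)/\gcd(l,m-1,n)$ by tracking $p$-adic valuations of $r$, $n$, $l$ and $m-1$. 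The main obstacle will be precisely this last identification of $\gcd((n+(m-1)l)/r,\,m-1)$ with $t$; once it is settled, $N\sim_u \mathrm{diag}(rt,\,u/(r^3 t))$, and assembling the three diagonal blocks yields the stated critical group decomposition.
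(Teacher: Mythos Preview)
Your approach is exactly the paper's: identify the reduced Laplacian as $L_{m,m}(s,-l)$, factor out $r=\gcd(l,n)$, invoke Corollary~\ref{BipartitaCompleta1}(ii), and then reduce the residual $2\times 2$ block to Smith normal form. Your determinant computation is correct, and your reduction of $\gcd(M'')$ to $\gcd(a',m-1)$ with $a'=s/r$ is also correct (the argument you sketch goes through since $\gcd(a',b')=1$ forces any common prime divisor to divide both $a'$ and $m-1$, and the entry $(m-2)a'b'$ then has $p$-adic valuation exactly $v_p(a')$).

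The obstacle you single out, however, is a genuine one and cannot be overcome: the identity $\gcd\bigl((n+(m-1)l)/r,\,m-1\bigr)=\gcd(m-1,n)/\gcd(l,m-1,n)$ is \emph{false} in general. Take $m=3$, $l=4$, $n=8$. Then $r=\gcd(4,8)=4$, $s=16$, $a'=4$, $b'=-1$, and the $2\times 2$ block is
\[
M''=\begin{pmatrix}16&-4\\-4&14\end{pmatrix},\qquad \gcd(M'')=2,
\]
so the first invariant factor of $N=rM''$ is $8$. But the stated formula gives $t=\gcd(2,8)/\gcd(4,2,8)=2/2=1$ and hence $rt=4$. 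One checks directly that $\mathbb{Z}_8\oplus\mathbb{Z}_{416}\not\cong\mathbb{Z}_4\oplus\mathbb{Z}_{832}$, so the discrepancy is not merely cosmetic. (The identity does hold whenever $r=1$, which may explain how it slipped through; the failure occurs precisely when a prime divides $l$, $n$, and $m-1$ with carefully chosen multiplicities.) The paper's own proof handles this step with ``it is not difficult to see,'' so the gap is present there as well; the correct first invariant factor of $N$ is $r\cdot\gcd(s/r,\,m-1)$, and the statement of the corollary should be amended accordingly.
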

\begin{proof}
In a similar way that in corollary~\ref{BipartitaCompletaG1}, $L(c_n(\mathcal{L}_{m,m}(l)),s)=L_{m,m}(n+(m-1)l,-l)$ and
applying corollary~\ref{BipartitaCompleta} $(ii)$ to $L_{m,m}(a/r,b/r)$ with $a=n+(m-1)l$ and $b=-l$
\[
L_{m,m}(n+(m-1)l,-l)
\sim_u
rI_m \oplus (s^2-l^2)/rI_{m-2} \oplus 
\left( 
\begin{array}{cc}
a^2/r&(m-2)ab/r\\
(m-2)ab/r &(a^2-(m-1)b^2)/r
\end{array}
\right),
\]
where $s=n+(m-1)l$.

On the other hand, it is not difficult to see that
\[
\left( 
\begin{array}{cc}
a^2/r&(m-2)ab/r\\
(m-2)ab/r &(a^2-(m-1)b^2)/r
\end{array}
\right)
\sim_u
\left( 
\begin{array}{cc}
rt&0\\
0 &u/r^3t
\end{array}
\right),
\]
where $t={\rm gcd}(m-1,n)/{\rm gcd}(l,m-1,n)$ and $u=s^2(n^2+2n(m-1)l+(m-2)l^2)$.
\end{proof}


\begin{Corollary}\label{BipartitaCompletaG3}
Let $m\geq 2$, $l\geq 1$, and $n\geq 0$, then
\[
K(c_n(\mathcal{L}_{M,M}(l)))=\mathbb{Z}_{r}^{m-1}\oplus\mathbb{Z}_{st/r}^{m-2}\oplus \mathbb{Z}_{u}\oplus \mathbb{Z}_{sv/u}\oplus \mathbb{Z}_{nst/rv},
\]
where $r={\rm gcd}(l,n)$, $s=n+2(m-1)l$, $t=n+2ml$, 
$u={\rm gcd}(n,(m-1)l)$, and $v={\rm gcd}(n,2(m-1)l^2/r)$. 
\end{Corollary}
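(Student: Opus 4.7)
Proof proposal. The plan is to identify the reduced Laplacian of $c_n(\mathcal{L}_{M,M}(l))$ with $L_{M,M}(s,-l)$ for $s=n+2(m-1)l$, apply the factor-and-invoke strategy used throughout Section~\ref{sec2}, and then compute the Smith normal form of a residual $3\times 3$ block by direct determinantal-divisor calculations.

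First I verify that $L(c_n(\mathcal{L}_{M,M}(l)),s)=L_{M,M}(s,-l)$: every vertex of $\mathcal{L}_{M,M}(l)$ has degree $2(m-1)l$, so coning contributes $n$ to each diagonal entry, producing $s$. Since $\gcd(s,-l)=\gcd(n,l)=r$, I write $L_{M,M}(s,-l)=r\,L_{M,M}(s/r,-l/r)$ with $\gcd(s/r,-l/r)=1$ and apply Corollary~\ref{BipartitaCompleta1}(iii). With $t=n+2ml$, the three block quantities simplify as $(s/r)(s/r+2l/r)=st/r^{2}$, $(s/r)+2(m-1)(-l/r)=n/r$, and $(s/r)(-l/r)=-sl/r^{2}$. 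Multiplying back by $r$ yields
\[
L_{M,M}(s,-l)\;\sim_u\;rI_{m-1}\;\oplus\;(st/r)\,I_{m-2}\;\oplus\;M,\qquad M=\begin{pmatrix} st/r & -sl/r & 0\\ 0 & n & 0\\ 0 & -(m-1)l & s\end{pmatrix}.
\]

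Next I compute the Smith normal form of $M$ via its determinantal divisors $\Delta_{1}\mid\Delta_{2}\mid\Delta_{3}$. Using $\gcd(t,l)=\gcd(n,l)=r$ gives $\gcd(st/r,sl/r)=s$, hence $\Delta_{1}(M)=\gcd(s,n,(m-1)l)=\gcd(n,(m-1)l)=u$. Expansion along the first column gives $\Delta_{3}(M)=\det M=ns^{2}t/r$. Enumerating the nine $2\times 2$ minors of $M$ and invoking $\gcd(t,l)=r$ once more reduces the middle invariant to
\[
\Delta_{2}(M)=\gcd\bigl((st/r)u,\;s^{2},\;ns\bigr)=s\,\gcd\bigl(tu/r,\;n,\;2(m-1)l\bigr).
\]
The heart of the argument is then the identity
\[
\gcd\bigl(tu/r,\;n,\;2(m-1)l\bigr)=\gcd\bigl(n,\;2(m-1)l^{2}/r\bigr)=v,
\]
which I prove by a prime-by-prime valuation comparison after replacing $t\equiv 2ml\pmod{n}$, using $\gcd(m,m-1)=1$ to peel off the factor of $m$, and noting that $r\mid u$ because $r\mid n$ and $r\mid(m-1)l$. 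Granting this, $\Delta_{2}(M)=sv$ and therefore $\operatorname{SNF}(M)=\mathrm{diag}(u,\,sv/u,\,nst/(rv))$.

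The main obstacle is this last gcd identity: every intermediate manipulation is elementary, but a clean justification requires splitting at each prime $p$ according to whether $\nu_p(n)\le\nu_p(l)$ or $\nu_p(n)>\nu_p(l)$, with a further sub-split controlled by $\nu_p(m-1)$ versus $\nu_p(n)-\nu_p(l)$. Once the identity is established, assembling the invariant factors of $M$ with the diagonal summands $rI_{m-1}$ and $(st/r)I_{m-2}$ from the first reduction yields the stated decomposition of $K(c_n(\mathcal{L}_{M,M}(l)))$.
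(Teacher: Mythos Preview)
Your proposal is correct and follows exactly the paper's approach: identify $L(c_n(\mathcal{L}_{M,M}(l)),s)=L_{M,M}(s,-l)$, factor out $r$, invoke Corollary~\ref{BipartitaCompleta1}(iii) to obtain $rI_{m-1}\oplus (st/r)I_{m-2}\oplus M$ with the same $3\times 3$ block $M$, and then reduce $M$ to $\mathrm{diag}(u,\,sv/u,\,nst/(rv))$. The paper merely asserts this last reduction with ``it is not difficult to see,'' whereas you supply the determinantal-divisor computation and the prime-by-prime verification of the gcd identity $\gcd(tu/r,\,n,\,2(m-1)l)=v$; this is additional detail, not a different method.
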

\begin{proof}
Since $L(c_n(\mathcal{L}_{M,M}(l)),s)=L_{M,M}(n+2(m-1)l,-l)$ and $r={\rm gcd}(n+2(m-1)l,-l)$,
then applying corollary~\ref{BipartitaCompleta} $(iii)$ to $L_{M,M}(a/r,b/r)$ with $a=n+2(m-1)l$ and $b=-l$ we get that
\[
L_{M,M}(n+2(m-1)l,-l)
\sim_u
rI_{m-1} \oplus st/rI_{m-2} \oplus 
\left( 
\begin{array}{ccc}
st/r&-sl/r & 0\\
0&n& 0\\
0& -(m-1)l & s
\end{array}
\right),
\]
where $r={\rm gcd}(l,n)$, $s=n+2(m-1)l$, and $t=n+2ml$.

On the other hand, it is not difficult to see that
\[
\left( 
\begin{array}{ccc}
a(a-2b)/r&ab/r & 0\\
0&a+2(m-1)b& 0\\
0& (m-1)b & a
\end{array}
\right)
=
\left( 
\begin{array}{ccc}
st/r&-sl/r & 0\\
0&n& 0\\
0& -(m-1)l & s
\end{array}
\right)
\sim_u
u \oplus sv/u \oplus nst/rv,
\]
where $u={\rm gcd}(n,(m-1)l)$ and $v={\rm gcd}(n,2(m-1)l^2/r)$.
\end{proof}


\begin{Corollary}\label{BipartitaCompletaG4}
Let $m\geq 2$, $l\geq 1$, and $n\geq 0$, then
\[
K(c_n(\mathcal{M}_{M,M}(l)))=\mathbb{Z}_{r}^{m+1}\oplus\mathbb{Z}_{(n+ml)(n+(m+2)l)/r}^{m-2}\oplus \mathbb{Z}_{u}\oplus \mathbb{Z}_{n(n+2l)(n+ml)(n+(m+2)l)/ur^2},
\]
where $r={\rm gcd}(l,n)$, 
$u={\rm gcd}(n(n+2l),lv(n+t))/r$, and $v={\rm gcd}(m,l/r)$. 
\end{Corollary}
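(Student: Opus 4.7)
The plan is to follow the same template as the three preceding corollaries. First I would identify $L(c_n(\mathcal{M}_{M,M}(l)),s)$ with $M_{M,M}(n+ml,-l)$: each vertex of $\mathcal{M}_{M,M}(l)$ has degree $ml$, the $n$-cone adds $n$ to every diagonal entry, and each edge contributes $-l$ off the diagonal. Since ${\rm gcd}(n+ml,-l)={\rm gcd}(l,n)=r$, I would then factor $M_{M,M}(n+ml,-l)=r\,M_{M,M}((n+ml)/r,-l/r)$, so that the scaled arguments are coprime and Corollary~\ref{BipartitaCompleta1}(iv) applies to the scaled matrix.

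Applying that corollary and multiplying the resulting Smith form back through by $r$ would give
\[
M_{M,M}(n+ml,-l)\sim_u rI_{m+1}\oplus\frac{(n+ml)(n+(m+2)l)}{r}I_{m-2}\oplus M_2,
\]
where $M_2$ is the explicit $2\times 2$ upper-triangular block obtained by substituting $a=(n+ml)/r$ and $b=-l/r$ into the $2\times 2$ matrix appearing in Corollary~\ref{BipartitaCompleta1}(iv). Using $a-2b=(n+(m+2)l)/r$ and $a+(m-1)b=(n+l)/r$, a direct calculation yields
\[
M_2=\left(\begin{array}{cc}(n+ml)(n+(m+2)l)/r & -l^2(2n+(m+2)l)/r^2\\ 0 & n(n+2l)/r\end{array}\right).
\]
Because $M_2$ is upper triangular over $\mathbb{Z}$, its Smith form is ${\rm diag}(d_1,d_2)$, where $d_1$ is the gcd of its three nonzero entries and $d_2=|{\rm det}\,M_2|/d_1=n(n+2l)(n+ml)(n+(m+2)l)/(r^2 d_1)$.

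Computing $d_1$ is the main obstacle. The algebraic identity
\[
(n+ml)(n+(m+2)l)-n(n+2l)=ml(2n+(m+2)l)
\]
lets me replace the upper-left entry by $ml(2n+(m+2)l)/r$ when taking the gcd, reducing $d_1$ to ${\rm gcd}(n(n+2l)/r,\,(2n+(m+2)l)\cdot{\rm gcd}(ml/r,\,l^2/r^2))$. Since both $ml/r$ and $l^2/r^2$ share the factor $l/r$, one has ${\rm gcd}(ml/r,l^2/r^2)=(l/r){\rm gcd}(m,l/r)=lv/r$, which collapses the expression to $d_1=u$. Assembling all the invariant factors then produces the claimed decomposition. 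The delicate bookkeeping of powers of $r$ when rescaling the Smith form, together with the careful factorization of $v={\rm gcd}(m,l/r)$ out of the middle gcd, is the only nontrivial content; everything else reduces to routine substitution into Corollary~\ref{BipartitaCompleta1}(iv).
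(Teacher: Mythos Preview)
Your argument is correct and follows the same route as the paper: identify $L(c_n(\mathcal{M}_{M,M}(l)),s)$ with $M_{M,M}(n+ml,-l)$, factor out $r$, apply Corollary~\ref{BipartitaCompleta1}(iv), and then reduce the residual $2\times2$ block by the identity $(n+ml)(n+(m+2)l)-n(n+2l)=ml(2n+(m+2)l)$ together with $\gcd(ml/r,l^2/r^2)=(l/r)\gcd(m,l/r)$. In fact you spell out the gcd computation that the paper leaves as ``not difficult to see,'' and you cite the correct corollary (the paper's reference to ``Corollary~\ref{BipartitaCompleta}~(iii)'' is a typo for \ref{BipartitaCompleta1}~(iv)).
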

\begin{proof}
Since $L(c_n(\mathcal{M}_{M,M}(l)),s)=M_{M,M}(n+ml,-l)$ and $r={\rm gcd}(n+ml,-l)$,
then applying corollary~\ref{BipartitaCompleta} $(iii)$ to $M_{M,M}(a/r,b/r)$ with $a=n+ml$ and $b=-l$ we get that
\[
M_{M,M}(n+ml,-l)
\sim_u
rI_{m+1} \oplus st/rI_{m-2} \oplus 
\left( 
\begin{array}{cc}
st/r&-l^2(n+t)/r^2\\
0& n(n+2l)/r,
\end{array}
\right)
\]
where $r={\rm gcd}(l,n)$, $s=n+ml$, and $t=n+(m+2)l$.

On the other hand, it is not difficult to see that
\[
\left( 
\begin{array}{cc}
st/r&-l^2(n+t)/r^2\\
0& n(n+2l)/r
\end{array}
\right)
\sim_u
u \oplus stn(n+2l)/ur^2,
\]
where $u={\rm gcd}(n(n+2l),lv(n+t))/r$ and $v={\rm gcd}(m,l/r)$.
\end{proof}

\begin{Remark}
Note that $\mathcal{M}_{M,M}(l)$ is the cartesian product of $\mathcal{K}_2(l)$ and $\mathcal{K}_n(l)$.
A deep analysis of the cartesian product of matrices can be found in~\cite{Matrix2}. 
\end{Remark}

\begin{Corollary}\label{BipartitaCompletaG5}
Let $m\geq 2$, $l\geq 1$, and $n\geq 0$, then
\[
K(c_n(\mathcal{K}_{m,M}(l)))= \mathbb{Z}_{ml+n}^{m-2} \oplus \mathbb{Z}_{2ml+n}^{m-2} 
\oplus \mathbb{Z}_{r}^2 \oplus \mathbb{Z}_{s(n+2ml)/r^2} \oplus \mathbb{Z}_{n(n+ml)(n+2ml)/s},
\]
where $r={\rm gcd}(l,n)$ and $s={\rm gcd}(n^2,mlr)$.
\end{Corollary}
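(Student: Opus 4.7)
The plan is to follow the template set by Corollaries~\ref{BipartitaCompletaG1}--\ref{BipartitaCompletaG4}: identify the reduced Laplacian with one of the parametric matrices built earlier in the section, invoke the corresponding Smith-like equivalence from Corollary~\ref{BipartitaCompleta}, and then clean up the small residual block over $\mathbb{Z}$.

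First I would identify $L(c_n(\mathcal{K}_{m,M}(l)),s)$ with $K_{m,M}(n+ml,-l)$ from the definition preceding Corollary~\ref{BipartitaCompleta}, checking that the $m\times m$ block structure of the reduced Laplacian matches $\Phi_m(A,B)$ with $A=\mathrm{diag}(n+ml,2(n+ml))$ and $B=\left(\begin{smallmatrix}0&-l\\-l&-l\end{smallmatrix}\right)$. Setting $r=\gcd(l,n)=\gcd(n+ml,-l)$, I would factor $K_{m,M}(n+ml,-l)=r\cdot K_{m,M}((n+ml)/r,-l/r)$, so that the two parameters are coprime and Bezout's identity is available. Applying Corollary~\ref{BipartitaCompleta}(i) to the scaled matrix and multiplying back through by $r$ then produces immediately the summands $\mathbb{Z}_r^2$, $\mathbb{Z}_{ml+n}^{m-2}$ and $\mathbb{Z}_{2ml+n}^{m-2}$, together with a residual $2\times 2$ matrix whose entries are polynomial in $(n+ml)/r$, $l/r$ and $m$.

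The remaining task is to put this $2\times 2$ residual into Smith normal form over $\mathbb{Z}$. Its determinant works out to $n(n+ml)(n+2ml)/r^2$ by a direct computation (matching the determinant of the original reduced Laplacian divided by the other explicit factors), so it suffices to identify the gcd of its entries. A short Bezout-based calculation using $\gcd((n+ml)/r,l/r)=\gcd(n/r,l/r)=1$ and reducing $(n+ml)/r$ modulo $m$ shows that this gcd equals $s(n+2ml)/r^2$ with $s=\gcd(n^2,mlr)$, which then gives the last two cyclic summands $\mathbb{Z}_{s(n+2ml)/r^2}$ and $\mathbb{Z}_{n(n+ml)(n+2ml)/s}$.

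The main obstacle will be precisely this last step: showing that the entry-gcd of the residual $2\times 2$ matrix is exactly $s(n+2ml)/r^2$ is not immediate from the shape of its entries. It requires combining the coprimalities after the factor of $r$ has been pulled out, together with a careful case analysis of how $\gcd(n,m)$ interacts with $r$, and finally verifying that $s(n+2ml)/r^2\mid n(n+ml)(n+2ml)/s$ so that the two numbers form a legitimate Smith normal form.
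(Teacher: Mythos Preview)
Your identification in the first step is incorrect, and this breaks the whole argument. The graph $\mathcal{K}_{m,M}$ is not regular: the $U$-vertices have degree $ml$ while the $V$-vertices have degree $(2m-1)l$. Hence the reduced Laplacian $L(c_n(\mathcal{K}_{m,M}(l)),s)$ has diagonal blocks $(ml+n)I_m$ and $((2m-1)l+n)I_m$, which in $\Phi_m$-coordinates means $A=\mathrm{diag}(ml+n,\,2ml+n)$, not your $A=\mathrm{diag}(n+ml,\,2(n+ml))$. In particular this matrix is \emph{not} equal to $K_{m,M}(n+ml,-l)=a(I_m\oplus 2I_m)-lA(\mathcal{K}_{m,M})$ for any single choice of $a$; the special definition of $K_{m,M}(a,b)$ before Corollary~\ref{BipartitaCompleta} was not designed so that the reduced Laplacian of the cone becomes a specialization of it. Running your plan through Corollary~\ref{BipartitaCompleta}(i) would therefore produce $\mathbb{Z}_{2(n+ml)}^{\,m-2}$ instead of the correct $\mathbb{Z}_{2ml+n}^{\,m-2}$, and the residual $2\times 2$ block would be wrong as well.

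The paper avoids this by not passing through $K_{m,M}(a,b)$ at all. It writes $L(c_n(\mathcal{K}_{m,M}(l)),s)=\Phi_m(A,B)$ with $A=\mathrm{diag}(ml+n,\,2ml+n)$ and $B=\left(\begin{smallmatrix}0&-l\\-l&-l\end{smallmatrix}\right)$, applies Theorem~\ref{CompletaMatrices} directly to get $\bigoplus^{m-2}A\oplus\left(\begin{smallmatrix}A&B\\0&A+mB\end{smallmatrix}\right)$, and then reduces the resulting $4\times 4$ integer block. The diagonal $A$ already yields the summands $\mathbb{Z}_{ml+n}^{m-2}\oplus\mathbb{Z}_{2ml+n}^{m-2}$, and the $4\times 4$ reduction gives the remaining factors $\mathbb{Z}_r^2\oplus\mathbb{Z}_{s(n+2ml)/r^2}\oplus\mathbb{Z}_{n(n+ml)(n+2ml)/s}$. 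Your outline can be salvaged by making exactly this change: skip Corollary~\ref{BipartitaCompleta} and work from Theorem~\ref{CompletaMatrices} with the correct $A$.
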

\begin{proof}
Since $L(c_n(\mathcal{K}_{m,M}(l)))= \Phi_m(A,B)$ for
$A=\left(\begin{array}{cc}ml+n&0\\0&2ml+n\end{array}\right)$ and $B=\left(\begin{array}{cc}0&-l\\-l&-l\end{array}\right)$,
then the result is followed by theorem~\ref{BipartitaCompleta} $(iv)$ because
\[
\left(
\begin{array}{cccc}
ml+n&0&0&-l\\
0&2ml+n&-l&-l\\
0&0&ml+n&-ml\\
0&0&-ml&ml+n
\end{array}
\right)
\sim_u
rI_2 \oplus (n+2ml)s/r^2 \oplus n(n+ml)(n+2ml)/s,
\]
where $r={\rm gcd}(l,n)$ and $s={\rm gcd}(n^2,mlr)$.
\end{proof}

\begin{Remark}
Note that $\mathcal{K}_{m,M}$ is the graph $\mathcal{K}_{2m}\setminus \mathcal{K}_{m}$.
In general the expression for $K(c_n(\mathcal{K}_{m,M}(l)))$ given in corollary~\ref{BipartitaCompletaG5} does not give us the invariant factors of $K(c_n(\mathcal{K}_{m,M}(l)))$.
Also note that $K(c_n(\mathcal{K}_{m,M}))$ has $2m-2$ invariant factors different to $1$.
\end{Remark}

\begin{Corollary}\label{BipartitaCompletaG6}
Let $m\geq 2$, $l\geq 1$, and $n\geq 0$, then
\[
K(c_n(\mathcal{M}_{m,M}(l)))= \mathbb{Z}_{r}^{m+1} \oplus \mathbb{Z}_{s/r}^{m-2} \oplus \mathbb{Z}_{n(n+2l)s/r^3},
\]
where $r={\rm gcd}(l,n)$ and $s=n^2+ml^2+nl(m+2)$.
\end{Corollary}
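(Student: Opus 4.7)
The plan is to mirror the strategy used for the preceding cone corollaries: realize the reduced Laplacian $L(c_n(\mathcal{M}_{m,M}(l)),s)$ in the block form $\Phi_m(A,B)$, invoke Theorem~\ref{CompletaMatrices} to decouple it into $m-2$ copies of a $2\times 2$ matrix $A$ together with a single $4\times 4$ residual block, and then compute the Smith normal forms over $\mathbb{Z}$ of these two pieces separately.

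For the identification step I would read $A$ and $B$ off the incidence structure of $c_n(\mathcal{M}_{m,M}(l))$. In $\mathcal{M}_{m,M}(l)$ the vertex $u_i$ has degree $l$ (one matching edge to $v_i$) and the vertex $v_i$ has degree $ml$ (matching edge plus $\mathcal{K}_m$ on $V$ with multiplicity $l$); after coning these become $l+n$ and $ml+n$. Matching the four $m\times m$ blocks of $L$ against the definition $K_m(a,b)=(a+b)I_m+bA(\mathcal{K}_m)$ gives
\[
A=\begin{pmatrix} l+n & -l \\ -l & (m+1)l+n \end{pmatrix},\qquad
B=\begin{pmatrix} 0 & 0 \\ 0 & -l \end{pmatrix},
\]
with $A+mB=\begin{pmatrix} l+n & -l \\ -l & l+n \end{pmatrix}$. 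Theorem~\ref{CompletaMatrices} then yields
\[
\Phi_m(A,B)\sim_e \bigoplus_{i=1}^{m-2} A \ \oplus\ \begin{pmatrix} A & B \\ 0 & A+mB \end{pmatrix}.
\]

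The Smith form of $A$ is immediate: the gcd of its entries is $\gcd(l+n,l)=\gcd(l,n)=r$ and $\det A=(l+n)((m+1)l+n)-l^2=s$, so $A\sim \mathrm{diag}(r,s/r)$. This accounts for the $\mathbb{Z}_r^{m-2}\oplus\mathbb{Z}_{s/r}^{m-2}$ part of the answer and reduces the problem to showing that the $4\times 4$ residual block Smith-forms to $\mathrm{diag}(r,r,r,\,n(n+2l)s/r^3)$, which is forced by counting invariants and the determinant $s\cdot n(n+2l)$.

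The hard part is this last Smith form. I would first factor out the common $r$ by writing $l=rl'$ and $n=rn'$ with $\gcd(l',n')=1$ and setting $\alpha=l'+n'$. The coprimality $\gcd(\alpha,l')=1$ lets a unimodular row operation on rows $1,2$ clear the $(1,1)$ entry to $1$; a column sweep then isolates $\mathrm{diag}(1,s/r^2)$ in the top-left and leaves a modified $(2,4)$ entry $-\alpha l'$, while the lower-left zero block is preserved. This reduces the task to Smith-forming
\[
N_3=\begin{pmatrix} s/r^2 & 0 & -\alpha l' \\ 0 & \alpha & -l' \\ 0 & -l' & \alpha \end{pmatrix},
\]
which has determinant $(s/r^2)(\alpha^2-l'^2)=s\,n(n+2l)/r^4$. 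The delicate step is showing $\Delta_2(N_3)=1$, equivalently $\gcd(s/r^2,\,\alpha l',\,n'(n'+2l'))=1$. This rests on the identity $s/r^2=n'(n'+2l')+ml'\alpha$ (a routine expansion of $s$) together with the observation that every prime dividing $l'\alpha$ is coprime to both $n'$ and $n'+2l'$, and hence to $n'(n'+2l')$. With $\Delta_2=1$ in hand, $N_3\sim\mathrm{diag}(1,1,\,sn(n+2l)/r^4)$; multiplying back by $r$ gives the $4\times 4$ block the Smith form $\mathrm{diag}(r,r,r,\,sn(n+2l)/r^3)$. Assembling with the $\bigoplus A$ part produces the stated critical group, and the divisibility checks $r^2\mid s$ and $r^2\mid n(n+2l)$ confirm that these are in fact the invariant factors.
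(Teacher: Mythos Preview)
Your proposal is correct and follows essentially the same route as the paper: identify $L(c_n(\mathcal{M}_{m,M}(l)),s)=\Phi_m(A,B)$ with the same $A=\begin{pmatrix} l+n & -l \\ -l & (m+1)l+n\end{pmatrix}$ and $B=\begin{pmatrix} 0 & 0 \\ 0 & -l\end{pmatrix}$, apply Theorem~\ref{CompletaMatrices}, and then Smith-reduce the $2\times 2$ block $A$ and the residual $4\times 4$ block separately. The paper simply asserts the two equivalences $A\sim_u r\oplus s/r$ and $\begin{pmatrix}A & B\\ 0 & A+mB\end{pmatrix}\sim_u rI_3\oplus n(n+2l)s/r^3$, whereas you actually carry out the second one via the reduction to $N_3$ and the $\Delta_2$ computation; your argument for $\gcd(\alpha l',\,n'(n'+2l'))=1$ is exactly what is needed and is not spelled out in the paper.
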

\begin{proof}
Since $L(c_n(\mathcal{M}_{m,M}(l)))= \Phi_m(A,B)$ for
$A=\left(\begin{array}{cc}l+n&-l\\-l&(m+1)l+n\end{array}\right)$ and $B=\left(\begin{array}{cc}0&0\\0&-l\end{array}\right)$,
then the result is followed by theorem~\ref{BipartitaCompleta} $(v)$ because
\[
\left(
\begin{array}{cc}
l+n&-l\\
-l&(m+1)l+n
\end{array}
\right)
\sim_u 
r \oplus s/r
\]
and
\[
\left(
\begin{array}{cccc}
l+n&-l&0&0\\
-l&(m+1)l+n&0&-l\\
0&0&l+n&-l\\
0&0&-l&l+n
\end{array}
\right)
\sim_u
rI_3 \oplus n(n+2l)s/r^3,
\]
where $r={\rm gcd}(l,n)$ and $s=n^2+ml^2+nl(m+2)$.
\end{proof}

We conclude the article with the critical group of the graph $\mathcal{L}_{m,M}$.

\begin{Corollary}\label{BipartitaCompletaG7}
Let $m\geq 2$, $l\geq 1$, and $n\geq 0$, then
\[
K(\mathcal{L}_{m,M})=\mathbb{Z}_{r}^{m} \oplus \mathbb{Z}_{s/r}^{m-2} \oplus \mathbb{Z}_{t} \oplus \mathbb{Z}_{n(n+2(m-1)l)s/tr^2},
\]
where $r={\rm gcd}(l,n)$, $s=n^2+(3m-2)nl+m(2m-3)l^2$, and $t={\rm gcd}(n,l^3(m-1)(2m-3)/r^2)$.
\end{Corollary}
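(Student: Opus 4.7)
The plan is to mimic the strategy used in Corollaries~\ref{BipartitaCompletaG5} and \ref{BipartitaCompletaG6}: recognize the reduced Laplacian of $c_n(\mathcal{L}_{m,M}(l))$ as $\Phi_m(A,B)$ for suitable $A,B\in M_2(\mathbb{Z})$ and then invoke Theorem~\ref{CompletaMatrices}. Ordering the non-sink vertices so that the parts $U,V$ of $\mathcal{L}_{m,M}$ appear as consecutive blocks of size $m$, and reading off the incidence pattern (no $U$--$U$ edges; each $u_i$ is joined to every $v_j$ with $j\ne i$; $V$ carries a copy of $\mathcal{K}_m$), one obtains
\[
A=\begin{pmatrix} n+(m-1)l & l \\ l & n+(2m-1)l\end{pmatrix},\qquad B=\begin{pmatrix} 0 & -l \\ -l & -l\end{pmatrix},
\]
so that $L(c_n(\mathcal{L}_{m,M}(l)),s)=\Phi_m(A,B)$. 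A direct check gives $\det A=s=n^2+(3m-2)nl+m(2m-3)l^2$ and
\[
A+mB=\begin{pmatrix} n+(m-1)l & -(m-1)l \\ -(m-1)l & n+(m-1)l\end{pmatrix},\qquad \det(A+mB)=n\bigl(n+2(m-1)l\bigr).
\]

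By Theorem~\ref{CompletaMatrices}, $\Phi_m(A,B)\sim_e\bigoplus^{m-2}A\oplus\bigl(\begin{smallmatrix} A & B \\ 0 & A+mB\end{smallmatrix}\bigr)$. Since the gcd of the entries of $A$ equals $\gcd(n+(m-1)l,l)=\gcd(l,n)=r$, we get $A\sim_u\mathrm{diag}(r,s/r)$; the $m-2$ copies of $A$ therefore already contribute the factors $\mathbb{Z}_r^{m-2}\oplus\mathbb{Z}_{s/r}^{m-2}$ of the claimed decomposition. It remains to show that the $4\times 4$ block $M_4=\bigl(\begin{smallmatrix} A & B \\ 0 & A+mB\end{smallmatrix}\bigr)$ is unitary-equivalent to $\mathrm{diag}\bigl(r,r,t,n(n+2(m-1)l)s/(tr^2)\bigr)$, which will add the remaining $\mathbb{Z}_r^{2}\oplus\mathbb{Z}_t\oplus\mathbb{Z}_{n(n+2(m-1)l)s/(tr^2)}$.

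For the Smith reduction of $M_4$, I would proceed in two Bézout-style stages. The first stage uses a solution of $(n+(m-1)l)x+ly=1$ (exactly as in the proof of Claim~\ref{reduction1}) to pivot a $1$ out of the top-left entry of the $A$-block; the same row operation simultaneously clears the top row of $B$, producing a block $1\oplus M_3'$ in which $M_3'$ is upper-triangular with $(s/r)$ in the $(1,1)$ entry and $A+mB$ in the bottom-right $2\times 2$. A second application of Bézout, this time using $\gcd(n+(m-1)l,(m-1)l)=\gcd(n,(m-1)l)$, extracts another factor of $r$ from $A+mB$ and reduces $M_3'$ to a block-diagonal matrix $r\oplus N$, where $N$ is a $2\times 2$ matrix with $\det N=n(n+2(m-1)l)s/r^2$ whose entries, after using the identity $s=(n+(m-1)l)(n+(2m-1)l)-l^2$, are proportional to $n$ and to $l^3(m-1)(2m-3)/r^2$. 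Putting $N$ in Smith form yields $\mathrm{diag}(t,n(n+2(m-1)l)s/(tr^2))$ with $t=\gcd(n,l^3(m-1)(2m-3)/r^2)$, as claimed.

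The main obstacle is Stage 2: identifying the off-diagonal entry of $N$ after the clearing, and showing that the gcd of the entries of $N$ is exactly $t$, requires the algebraic identity relating $s$ and $n+2(m-1)l$ mentioned above, together with the divisibilities $r\mid n,\,r\mid l$ (which give $r\mid t$, since $r^3\mid l^3$) and $t\mid n(n+2(m-1)l)s/(tr^2)$ (which follows once one checks that $t^2r^2\mid n(n+2(m-1)l)s$ using $t\mid n$). With these verifications, the Smith normal form of $M_4$ is established, and combining with Step~2 produces the asserted decomposition of $K(c_n(\mathcal{L}_{m,M}(l)))$.
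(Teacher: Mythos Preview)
Your approach coincides with the paper's: both recognise $L(c_n(\mathcal{L}_{m,M}(l)),s)=\Phi_m(A,B)$ with
\[
A=\begin{pmatrix} n+(m-1)l & l \\ l & n+(2m-1)l\end{pmatrix},\qquad
B=\begin{pmatrix} 0 & -l \\ -l & -l\end{pmatrix},
\]
apply Theorem~\ref{CompletaMatrices}, reduce each copy of $A$ to $\mathrm{diag}(r,s/r)$, and then assert that the $4\times4$ block $\bigl(\begin{smallmatrix}A&B\\0&A+mB\end{smallmatrix}\bigr)$ is equivalent to $\mathrm{diag}(r,r,t,\,n(n+2(m-1)l)s/(tr^2))$. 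The paper offers no more detail on that last reduction than you do.

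Where your write-up goes wrong is in the execution of the $4\times4$ reduction. Your Stage~1 invokes ``a solution of $(n+(m-1)l)x+ly=1$'', but $\gcd(n+(m-1)l,l)=\gcd(n,l)=r$, so no such solution exists once $r>1$; you cannot pivot a $1$ into the top-left corner. This inconsistency propagates: you simultaneously claim a block $1\oplus M_3'$ and an entry $s/r$ in $M_3'$, yet if the pivot really were $1$ the remaining $3\times3$ determinant would be $s\cdot n(n+2(m-1)l)$, not $(s/r)\cdot n(n+2(m-1)l)$. Similarly, in Stage~2 the gcd $\gcd(n+(m-1)l,(m-1)l)=\gcd(n,(m-1)l)$ need not equal $r$ (take $n=4$, $l=2$, $m=3$: here $r=2$ but $\gcd(n,(m-1)l)=4$), so ``extracts another factor of $r$'' is not justified as written.

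The clean fix is to note first that $r$ divides every entry of $M_4$, factor it out, and then run your B\'ezout pivots on $M_4/r$, where the relevant gcd's are genuinely $1$. After two such pivots one is left (up to elementary operations) with $I_2\oplus N'$ for a $2\times2$ integer matrix $N'$ of determinant $s\,n(n+2(m-1)l)/r^4$; computing $\Delta_1(N')$ and verifying it equals $t/r$ then yields $M_4\sim_u r\cdot\mathrm{diag}(1,1,t/r,\,n(n+2(m-1)l)s/(tr^3))=\mathrm{diag}(r,r,t,\,n(n+2(m-1)l)s/(tr^2))$, which is what both you and the paper claim.
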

\begin{proof}
Since
$L(c_n(\mathcal{L}_{m,M}(l)))= \Phi_m(A,B)$ for $A=\left(\begin{array}{cc}(m-1)l+n&l\\l&(2m-1)l+n\end{array}\right)$ and $B=\left(\begin{array}{cc}0&-l\\-l&-l\end{array}\right)$,
then the result is followed by theorem~\ref{BipartitaCompleta} $(vi)$ because
\[
\left(
\begin{array}{cc}
(m-1)l+n&l\\
l&(2m-1)l+n
\end{array}
\right)
\sim 
r\oplus s/r,
\]
where $r={\rm gcd}(l,n)$, $s=n^2+(3m-2)nl+m(2m-3)l^2$, and
\[
\left(
\begin{array}{cccc}
(m-1)l+n&l&0&-l\\
l&(2m-1)l+n&-l&-l\\
0&0&(m-1)l+n&-(m-1)l\\
0&0&-(m-1)l&(m-1)l+n
\end{array}
\right)
\sim_u
rI_2 \oplus t \oplus n(n+2(m-1)l)s/tr^2,
\]
where $t={\rm gcd}(n,l^3(m-1)(2m-3)/r^2)$.
\end{proof}




\begin{thebibliography}{100}

%


\bibitem{Bai}{H. Bai, On the critical group of the n-cube, Linear Algebra Appl. 369 (2003) 251--261.}

\bibitem{bak87}{ P. Bak, C. Tang, and K. Wiesenfeld. Self-organized criticality: an explanation of the 1/f noise. Phys. Rev. Lett. 59 (1987), no. 4 381--384.}


\bibitem{berget09}{A. Berget, A. Manion, M. Maxwell, A. Potechin, and V. Reiner, The critical group of a line graph, preprint, 2009. {\tt ArXiv:math/09041246}.}



\bibitem{biggs99}{N. L. Biggs, Chip-firing and the critical group of a graph, J. Algebraic Combin. 9 (1999) 25--45.}




\bibitem{p4cn}{P. Chen and Y. Hou, On the critical group of $P_4 \times C_n$, Europ. J. Combinatorics 29 (2008) 532--534.}

\bibitem{mobius}{P. Chen, Y. Hou, and C.Woo, On the critical group of the M\"obius ladder graph, Australas. J. Combin. 36 (2006) 133--142.}


\bibitem{threshold}{H. Christianson and V. Reiner, The critical group of a threshold graph, Linear Algebra Appl. 349 (2002) 233--244.}




\bibitem{corralesthesis}{H. Corrales, The critical group of the cartesian product of matrices, Master thesis, Mathematics Department, Cinvestav-IPN, february 2010. In spanish.}

\bibitem{Matrix2}{H. Corrales and C. Valencia, On the critical group of the cartesian product of matrices, in preparation.}


\bibitem{dhar90}{D. Dhar, Self-organized critical state of sandpile automaton models. Phys. Rev. Lett. 64 (1990), no. 14 1613--1616.}


\bibitem{diestel}{Diestel R., Graph Theory, GTM 173, Springer-Verlag, New York, 2005.}



\bibitem{k3cn}{Y. Hou, T. Lei, and C. Woo, On the sandpile group of the graph $\mathcal{ K}_3 \times C_n$, Linear Algebra Appl. 428 (2008) 1886--1898.}

\bibitem{cartesian}{B. Jacobson, A. Niedermaier, and V. Reiner, Critical groups for complete multipartite graphs and Cartesian products of complete graphs, J. Graph Theory 44 (2003) 231--250.}



\bibitem{kmpn}{H. Liang, Y-L Pan, and J. Wang, The critical group of $\mathcal{ K}_m\times P_n$, Linear Algebra Appl. 428 11-12 (2008) 2723--2729.}


\bibitem{lorenzini91}{D. J. Lorenzini, A finite group attached to the Laplacian of a graph, Discrete Math. 91 (1991) 277--282.}



\bibitem{musiker}{G. Musiker, The critical group of a family of graphs and elliptic curves over finite groups, J. Algebraic Combin. 30 (2009) 255--276.}

\bibitem{shen}{J. Shen and Y. Hou, On the sandpile group of $3\times n$ twisted bracelets, Linear Algebra Appl. 429 8-9, 16 (2008) 1894--1904.}



\bibitem{directed}{D. G. Wagner, The critical group of a directed graph, preprint, 2000. {\tt ArXiv:math/0010241}.}

\bibitem{wang09}{J. Wang and Y. L. Pan, The critical group of $C_4\times C_n$, preprint, 2009. {\tt ArXiv:math/09123079}.}

\bibitem{wang09p}{J. Wang, Y. L. Pan,  and J. M. Xu, The critical group of $K_m\times C_n$, preprint, 2009. {\tt ArXiv:math/09123609}.}

\end{thebibliography}
\end{document}